\newtheorem{theorem}{Theorem}[section]
\newtheorem{cor}[theorem]{Corollary}
\newtheorem{lemma}[theorem]{Lemma}
\newtheorem{prop}[theorem]{Proposition}
\newtheorem{que}[theorem]{Question}
\theoremstyle{definition}
\newtheorem{defin}[theorem]{Definition}
\newtheorem{fact}[theorem]{Fact}
\theoremstyle{remark}
\newtheorem*{rem}{Remark}
\newtheorem*{claim}{Claim}
\newcommand{\flim}[1]{\mathrm{Flim}(#1)}
\newcommand{\age}[1]{\mathrm{Age}(#1)}
\newcommand{\fin}[1]{\mathrm{Fin}(#1)}
\newcommand{\fr}{Fra\"iss\'e }
\renewcommand{\phi}{\varphi}
\newcommand{\emb}[1]{\mathrm{Emb}(#1)}
\newcommand{\aut}[1]{\mathrm{Aut}(#1)}
\newcommand{\ran}[1]{\mathrm{ran}(#1)}
\newcommand{\cupdots}{\cup\cdots\cup}
\newcommand{\hatf}{\,\hat{\rule{-0.5ex}{1.5ex}\smash{f}}}
\newcommand{\hati}[2]{\hat{\imath}_{#1}^{#2}}
\newcommand{\tildei}[2]{\tilde{\imath}_{#1}^{#2}}
\begin{document}
\title{Thick, syndetic, and piecewise syndetic subsets of \fr structures}
\author{Andy Zucker}
\maketitle

\begin{abstract}
We define and undertake a systematic study of thick, syndetic, and piecewise syndetic subsets of a \fr structure. Each of these collections forms a family in the sense of Akin and Glasner [AG], and we define and study ultrafilters on each of these families, paying special attention to ultrafilters on the thick sets. In the process, we generalize many results of Bergelson, Hindman, and McCutcheon [BHM]. We also discuss some abstract questions about families implicit in the work of Akin and Glasner.
\let\thefootnote\relax\footnote{2010 Mathematics Subject Classification. Primary: 37B05; Secondary: 03C15, 03E15, 05D10, 22F50, 54D35, 54D80, 54H20.}
\let\thefootnote\relax\footnote{Key words and phrases. Fra\"iss\'e theory, topological dynamics, ultrafilters, topological semigroups.}
\let\thefootnote\relax\footnote{The author was partially supported by NSF Grant no.\ DGE 1252522.}
\end{abstract}

\section{Introduction}
Let $G$ be a discrete group, and let $\fin{G}$ denote the finite subsets of $G$. Recall the following definitions (see either [BHM] or [HS]).

\begin{defin}\mbox{}
\label{ComboDef}
\begin{itemize}
\item
$T\subseteq G$ is \emph{thick} if for every $E\in \fin{G}$, there is $g\in G$ with $gE\subseteq T$; equivalently, $T$ is thick if the collection $\{Tg^{-1}: g\in G\}$ has the finite intersection property,
\item
$S\subseteq G$ is \emph{syndetic} if $G\setminus S$ is not thick; equivalently, $S$ is syndetic if there is $E\in \fin{G}$ with $\bigcup_{g\in E} Sg^{-1} = G$,
\item
$P\subseteq G$ is \emph{piecewise syndetic} if there is $E\in \fin{G}$ with $\bigcup_{g\in E} Pg^{-1}$ thick.
\end{itemize}
\end{defin}

Note the left-right switch from the definitions found in [BHM] or [HS]; the reasons for this switch will be explained later. There are several equivalent formulations of these definitions. One such formulation uses the space $2^G$, which we identify with the space of subsets of $G$. $G$ acts on $2^G$ by \emph{right shift}, i.e.\ for $\chi_A\in 2^G$ and $g, h\in G$, we have $\chi_A\cdot g(h) = \chi_A(gh)$. We now have the following equivalent characterizations.

\begin{fact}\mbox{}
\label{ShiftDef}
\begin{itemize}
\item
$T\subseteq G$ is thick iff $\chi_G\in \overline{\chi_T\cdot G}$,
\item
$S\subseteq G$ is syndetic iff $\chi_\emptyset \not\in \overline{\chi_S\cdot G}$,
\item
$P\subseteq G$ is piecewise syndetic iff there is syndetic $S\subseteq G$ with $\chi_S\in \overline{\chi_P\cdot G}$.
\end{itemize}
\end{fact}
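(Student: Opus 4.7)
The plan is to translate ``$\chi_A\in\overline{\chi_B\cdot G}$'' into a combinatorial matching condition and then verify the three bullets. Basic open neighborhoods of $\chi_A$ in $2^G$ are cylinders $\{\chi : \chi(h) = \chi_A(h)\text{ for all }h\in E\}$ indexed by $E\in \fin{G}$, and the action formula gives $\chi_B\cdot g(h) = \chi_B(gh)$. Hence $\chi_A\in\overline{\chi_B\cdot G}$ if and only if for every $E\in\fin{G}$ there exists $g\in G$ with $\chi_B(gh) = \chi_A(h)$ for all $h\in E$.

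Specializing this matching condition to $A = G$ reads as $gE\subseteq B$, recovering the first bullet verbatim. For the second bullet I would use the $G$-equivariant homeomorphism $\phi: 2^G \to 2^G$, $\phi(\chi_A) = \chi_{G\setminus A}$, which swaps $\chi_\emptyset$ and $\chi_G$: thus $\chi_\emptyset\in \overline{\chi_S\cdot G}$ iff $\chi_G\in\overline{\chi_{G\setminus S}\cdot G}$ iff $G\setminus S$ is thick, iff $S$ is not syndetic. (Equivalently, one can specialize the matching condition to $A = \emptyset$ and read off the same equivalence directly.)

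For the third bullet the backward direction is another direct matching argument. Given syndetic $S$ with witness $E\in\fin{G}$ and $\chi_S\in\overline{\chi_P\cdot G}$, to see that $T := \bigcup_{g\in E} Pg^{-1}$ is thick, fix $K\in\fin{G}$; for each $k\in K$ pick $g_k\in E$ with $kg_k\in S$ using syndeticity of $S$, and then apply the matching condition to the finite set $\{kg_k: k\in K\}$ to obtain $f\in G$ with $fkg_k\in P$ for every $k$, whence $fK\subseteq T$. The forward direction is the one step with genuine content, which I would handle via a continuous equivariant map. Given $E\in\fin{G}$ with $T$ thick, define $\pi_E: 2^G\to 2^G$ by $\pi_E(\chi_A)(h) = \max_{g\in E}\chi_A(hg)$; this is continuous (as $E$ is finite), and a short calculation verifies that it is $G$-equivariant, so $\pi_E(\overline{\chi_P\cdot G})$ is a closed $G$-invariant subset of $2^G$ containing $\pi_E(\chi_P)\cdot G = \chi_T\cdot G$, hence also $\overline{\chi_T\cdot G}\ni \chi_G$ by the first bullet. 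Any $\chi_S\in\overline{\chi_P\cdot G}$ with $\pi_E(\chi_S) = \chi_G$ then satisfies $\bigcup_{g\in E}Sg^{-1} = G$, so $S$ is syndetic, as required.
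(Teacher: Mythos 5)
Your proof is correct, and it is self-contained. The paper does not actually prove Fact~\ref{ShiftDef} — it records it as a known fact with references to [BHM] and [HS] — so there is no proof in the text to compare against directly. The closest analogue in the paper is the generalized Proposition~\ref{ComboPWS}, where the forward direction of the piecewise syndetic characterization is handled by a sequential compactness argument (extract a convergent subsequence of shifts $\chi_P\cdot g_N$ and observe the limit is syndetic). Your route is structurally different: you package the "union of translates" operation as a single continuous $G$-equivariant map $\pi_E$ on $2^G$, push the orbit closure $\overline{\chi_P\cdot G}$ forward, and invoke the first bullet on the image to land on $\chi_G$, then pull back. That buys a cleaner argument that avoids any appeal to metrizability/subsequences, at the cost of having to verify equivariance and closedness of the image (both of which you handle correctly). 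The backward direction and the syndetic bullet via the complementation involution are straightforward and correct.
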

\vspace{2 mm}

Another formulation uses $\beta G$, the space of ultrafilters on $G$. We view $G$ as a subspace of $\beta G$ by identifying $g\in G$ with the principal ultrafilter containing $g$. To each subset $A\subseteq G$, we can form the clopen set $\overline{A} := \{p\in \beta G: A\in p\}$, and these clopen sets form a basis for the topology of $\beta G$. We endow $\beta G$ with the structure of a \emph{compact left-topological semigroup}. Compact left-topological semigroups contain minimal right ideals which are always closed. In $\beta G$, each minimal right ideal is also a minimal (right) $G$-flow. We now have the following.

\begin{fact}\mbox{}
\label{SemigroupDef}
\begin{itemize}
\item
$T\subseteq G$ is thick iff $\overline{T}\subseteq \beta G$ contains a minimal right ideal,
\item
$S\subseteq G$ is syndetic iff $\overline{S}\cap M\neq \emptyset$ for every minimal right ideal $M\subseteq \beta G$,
\item
$P\subseteq G$ is piecewise syndetic iff $\overline{P}\cap M \neq\emptyset$ for some minimal right ideal $M\subseteq \beta G$.
\end{itemize}
\end{fact}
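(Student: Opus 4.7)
The plan is to translate the combinatorial conditions into statements about membership in ultrafilters, using the identity $A\in pg \iff Ag^{-1}\in p$ for $p\in\beta G$ and $g\in G$ (direct from the definition of multiplication in $\beta G$). The continuity ingredients I will need are: left multiplication $\lambda_p:x\mapsto px$, continuous because $\beta G$ is left-topological; and each right shift $\rho_g:x\mapsto xg$ for $g\in G$, continuous because $\rho_g^{-1}(\overline{A})=\overline{Ag^{-1}}$ is clopen.

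For the thick case, the direction $(\Leftarrow)$ is immediate: if $M\subseteq\overline{T}$ is a minimal right ideal and $p\in M$, then $pg\in M\subseteq\overline{T}$ gives $Tg^{-1}\in p$ for every $g\in G$, and these all lying in a single ultrafilter already have the FIP. For $(\Rightarrow)$, thickness produces a common point $p\in\bigcap_g\overline{Tg^{-1}}$, i.e.\ $pG\subseteq\overline{T}$, and continuity of $\lambda_p$ on $\overline{G}=\beta G$ then gives $p\beta G\subseteq\overline{T}$. Now $p\beta G$ is a right ideal; to extract a minimal one, observe that $p\beta G$ meets the smallest two-sided ideal $K(\beta G)$ (e.g.\ $pk\in p\beta G\cap K$ for any $k\in K$), and any $q$ in this intersection lies in some minimal right ideal $M$; then $q\beta G$ is a nonempty right ideal contained in both $p\beta G$ and $M$, so by minimality $q\beta G=M\subseteq\overline{T}$.

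The syndetic case is just dualization: $S$ is syndetic iff $G\setminus S$ is not thick, and $\overline{G\setminus S}=\beta G\setminus\overline{S}$, so this is equivalent to every minimal right ideal meeting $\overline{S}$. For piecewise syndetic, $(\Rightarrow)$ applies the thick case to $\bigcup_{g\in E}Pg^{-1}$ and picks the appropriate $g\in E$ for a given $q$ in the minimal right ideal to land in $\overline{P}$ (while $qg$ stays in the same minimal right ideal). The substantive direction is $(\Leftarrow)$: given $p\in M\cap\overline{P}$, I use that $M$ is a minimal right $G$-flow under the $\rho_g$, so for every $q\in M$ one has $p\in\overline{qG}$; openness of $\overline{P}$ supplies some $g\in G$ with $qg\in\overline{P}$, i.e.\ $q\in\rho_g^{-1}(\overline{P})$. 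The clopen sets $\rho_g^{-1}(\overline{P})$, $g\in G$, therefore cover the compact $M$, and a finite subcover $E$ translates back, via the basic identity, into $M\subseteq\bigcup_{g\in E}\overline{Pg^{-1}}$, witnessing that $\bigcup_{g\in E}Pg^{-1}$ is thick.

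The two places where the argument actually touches the minimal-ideal structure, rather than merely unfolding definitions, are the extraction of a minimal right ideal inside $p\beta G$ in the thick $(\Rightarrow)$ direction, and the compactness extraction of the finite $E$ in the PS $(\Leftarrow)$ direction. Both are short given the standard $\beta G$ toolkit, but they are where the semigroup/$G$-flow structure (as opposed to just the topology of $\beta G$) is genuinely being used; the rest is formal manipulation with the identity $A\in pg\iff Ag^{-1}\in p$ and the clopen basis of $\beta G$.
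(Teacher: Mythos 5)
The paper states this Fact as known background without proof, citing [BHM] and [HS], so there is no internal proof to match against; your argument is a correct (and essentially the standard) one. The thick case is handled exactly as in the references: the identity $A\in pg\Leftrightarrow Ag^{-1}\in p$ translates the finite intersection property of $\{Tg^{-1}:g\in G\}$ into the existence of a single $p$ with $pG\subseteq\overline{T}$, and continuity of $\lambda_p$ upgrades this to $p\beta G\subseteq\overline{T}$, after which one extracts a minimal right ideal from the compact right ideal $p\beta G$. Your detour through $K(\beta G)$ works, though it is slightly more economical simply to invoke the Zorn's-lemma fact that every compact right ideal of a compact left-topological semigroup contains a minimal one. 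The syndetic case by complementation, and the piecewise syndetic equivalence (finite partition of $T\in q$ for $(\Rightarrow)$; compactness of $M$ together with density of $qG$ and openness of $\overline{P}$ for $(\Leftarrow)$), are likewise correct.

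It is worth flagging the contrast with the route the paper itself takes when it generalizes this Fact to the \fr setting. There, the thick half is obtained via Theorems~\ref{ThickUlts} and \ref{RegularMaps} and their Corollary, which characterize minimal right ideals of $S(G)$ level by level in terms of ultrafilters on the families $\mathcal{T}_m$; the piecewise syndetic half is Proposition~\ref{PWSM}, proved through the dynamical characterization in $2^{H_m}$. Your direct ultrafilter computation does not transfer to that setting, since $S(G)=\varprojlim\beta H_n$ is not the \v{C}ech--Stone compactification of a single discrete set and there is no single $\beta X$ in which to run the FIP/translate argument; this is precisely the obstruction that the machinery of Section~3 is built to overcome. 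For the discrete-group Fact at hand, though, your direct proof is the right one.
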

\vspace{2 mm}

The primary goal of this paper is to generalize these definitions and characterizations to the setting of countable first-order structures and their groups of automorphisms. We will be especially interested in \emph{ultrahomogeneous} structures, structures where every isomorphism between finite substructures extends to an automorphism of the whole structure. If $\mathbf{K}$ is an ultrahomogeneous structure with automorphism group $G$, we endow $G$ with the \emph{pointwise convergence topology}. If $\mathbf{A}\subseteq \mathbf{K}$ is a finite substructure, then the pointwise stabilizer $G_\mathbf{A} := \{g\in G: \forall a\in \mathbf{A} (g\cdot a = a)\}$ is an open subgroup; these open subgroups form a neighborhood base for $G$ at the identity. Now suppose that $f: \mathbf{A}\rightarrow \mathbf{K}$ is an embedding; we can identify $f$ with the (non-empty!) set of $g\in G$ which extend $f$. Notice that $\{g\in G: g|_{\mathbf{A}} = f\}$ is a left coset of $G_\mathbf{A}$, and we can identify $\emb{\mathbf{A}, \mathbf{K}}$ with the set of left cosets of $G_\mathbf{A}$. Our generalized notions of thick, syndetic, and piecewise syndetic will describe subsets of $\emb{\mathbf{A}, \mathbf{K}}$.

There are three difficulties worth previewing now. First, each finite substructure of $\mathbf{K}$ will give rise to different notions of thick, syndetic, and piecewise syndetic; we will need to understand how these different notions interact with each other. Second, note that when $G$ is a countable discrete group, $G$ acts on itself on both the left and the right. The right action is used in Definition \ref{ComboDef}, and the left action, which is needed to define the right shift action on $2^G$, is used in Fact \ref{ShiftDef}. However, when $G = \aut{\mathbf{K}}$ and $\mathbf{A}\subseteq \mathbf{K}$ is a finite substructure, there is a natural left $G$ action on $\emb{\mathbf{A}, \mathbf{K}}$, but no natural notion of right action. Third, when $G = \aut{\mathbf{K}}$, the space of ultrafilters on $G$ is too fine a compactification to use. The correct compactificaton to use is the \emph{Samuel compactification} of $G$, denoted $S(G)$; we will need an explicit construction of $S(G)$ to allow us to state something similar to Fact \ref{SemigroupDef}.

A secondary goal of this paper will be to consider the notions of thick, syndetic, and piecewise syndetic sets as \emph{families}, collections of subsets of a set $X$ closed upwards under inclusion. Families have been considered by Akin and Glasner [AG] and implicitly by Brian [B]. We will need to study the family of thick sets in some detail; along the way, we will define ultrafilters on families and address some general questions about how these ultrafilters interact with maps.

\section{Background}
\subsection{\fr structures}
Recall that $S_\infty$ is the group of all permutations of a countable set $X$. Given a finite $B\subseteq X$, the \emph{pointwise stabilizer} of $B$ is the set $N_B = \{g\in S_\infty: \forall\, b\!\in \!B\,( g(b) = b)\}$. We can endow $S_\infty$ with the pointwise convergence topology, where a basis of open sets at the identity is given by the collection $\{N_B: B\in \mathcal{P}_{fin}(X)\}$. This turns $S_\infty$ into a Polish group. Note that each $N_B$ is a clopen subgroup of $S_\infty$.

Fix now $G$ a closed subgroup of $S_\infty$. A convenient way to describe the $G$-orbits of finite tuples from $X$ is given by the notions of a \fr class and structure. 

\begin{defin}
Let $L$ be a relational language. A \emph{\fr class} $\mathcal{K}$ is a class of $L$-structures with the following four properties.

\begin{enumerate}
\item
$\mathcal{K}$ contains only finite structures, contains structures of arbitrarily large finite cardinality, and is closed under isomorphism.
\item
$\mathcal{K}$ has the \emph{Hereditary Property} (HP): if $\mathbf{B}\in \mathcal{K}$ and $\mathbf{A}\subseteq \mathbf{B}$, then $\mathbf{A}\in \mathcal{K}$.
\item
$\mathcal{K}$ has the \emph{Joint Embedding Property} (JEP): if $\mathbf{A}, \mathbf{B}\in \mathcal{K}$, then there is $\mathcal{C}$ which embeds both $\mathbf{A}$ and $\mathbf{B}$.
\item
$\mathcal{K}$ has the \emph{Amalgamation Property} (AP): if $\mathbf{A}, \mathbf{B}, \mathbf{C}\in \mathcal{K}$ and $f: \mathbf{A}\rightarrow \mathbf{B}$ and $g: \mathbf{A}\rightarrow \mathbf{C}$ are embeddings, there is $\mathbf{D}\in \mathcal{K}$ and embeddings $r: \mathbf{B}\rightarrow \mathbf{D}$ and $s:\mathbf{C}\rightarrow\mathbf{D}$ with $r\circ f = s\circ g$.
\end{enumerate}
\end{defin}

If $\mathbf{D}$ is an infinite $L$-structure, we write $\age{\mathbf{D}}$ for the class of finite $L$-structures which embed into $\mathcal{D}$. The following is the major fact about \fr classes.

\begin{fact}
If $\mathcal{K}$ is a \fr class, there is up to isomorphism a unique countably infinte $L$-structure $\mathbf{K}$ with $\age{\mathbf{K}} = \mathcal{K}$ satisfying one of the following two equivalent conditions.

\begin{enumerate}
\item
$\mathbf{K}$ is \emph{ultrahomogeneous}: if $f: \mathbf{A}\rightarrow \mathbf{B}$ is an isomorphism between finite substructures of $\mathbf{K}$, then there is an automorphims of $\mathbf{K}$ extending $f$.
\item
$\mathbf{K}$ satisfies the \emph{Extension Property}: if $\mathbf{B}\in \mathcal{K}$, $\mathbf{A}\subseteq \mathbf{B}$, and $f: \mathbf{A}\rightarrow \mathbf{K}$ is an embedding, there is an embedding $h: \mathbf{B}\rightarrow \mathbf{K}$ extending $f$.
\end{enumerate}

Conversely, if $\mathbf{K}$ is a countably infinite $L$-structure satisfying 1 or 2, then $\age{\mathbf{K}}$ is a \fr class.
\end{fact}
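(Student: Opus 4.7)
The plan is to establish four things in sequence: the equivalence of conditions (1) and (2), the existence of $\mathbf{K}$ with $\age{\mathbf{K}}=\mathcal{K}$ satisfying them, its uniqueness, and the converse that the age of any countable ultrahomogeneous structure is a Fra\"iss\'e class. Two of these (equivalence and uniqueness) are back-and-forth arguments, existence is a chain construction, and the converse is a direct verification of HP, JEP, and AP.

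For $(1)\Rightarrow(2)$, I would take $f:\mathbf{A}\to\mathbf{K}$ and $\mathbf{A}\subseteq\mathbf{B}\in\mathcal{K}$, use $\age{\mathbf{K}}=\mathcal{K}$ to pick any embedding $g:\mathbf{B}\to\mathbf{K}$, then use ultrahomogeneity to produce an automorphism $\phi$ of $\mathbf{K}$ with $\phi\circ g|_{\mathbf{A}}=f$; the composition $\phi\circ g$ extends $f$. For $(2)\Rightarrow(1)$, I would enumerate $\mathbf{K}=\{k_n:n\in\omega\}$ and perform back-and-forth starting from a given isomorphism between finite substructures, using the extension property to swallow one element into the domain (resp.\ range) at each alternating step. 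Uniqueness of $\mathbf{K}$ is the same back-and-forth run between two candidate structures.

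For existence, I would enumerate $\mathcal{K}$ up to isomorphism as $\mathbf{D}_0,\mathbf{D}_1,\ldots$ and build a chain $\mathbf{C}_0\subseteq\mathbf{C}_1\subseteq\cdots$ in $\mathcal{K}$, managing a queue of ``extension tasks'' $(\mathbf{A},\mathbf{B},e)$ where $e:\mathbf{A}\to\mathbf{C}_n$ is an embedding and $\mathbf{A}\subseteq\mathbf{B}\in\mathcal{K}$. Each task is resolved by a single application of AP to amalgamate $\mathbf{C}_n$ with $\mathbf{B}$ over $e$, and JEP is applied periodically so that every $\mathbf{D}_n$ embeds at some stage. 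The limit $\mathbf{K}:=\bigcup_n \mathbf{C}_n$ has $\age{\mathbf{K}}\subseteq\mathcal{K}$ by HP, $\age{\mathbf{K}}\supseteq\mathcal{K}$ by JEP and the enumeration, and satisfies the extension property by fair scheduling. For the converse, suppose $\mathbf{K}$ is countable and ultrahomogeneous: HP is trivial, JEP follows from taking the finite substructure of $\mathbf{K}$ generated by two embedded copies, and AP reduces to ultrahomogeneity: embed $\mathbf{B}$ and $\mathbf{C}$ into $\mathbf{K}$ via $r_0,s_0$, use ultrahomogeneity to extend the partial isomorphism $s_0\circ g\circ f^{-1}\circ r_0^{-1}$ between $r_0(f(\mathbf{A}))$ and $s_0(g(\mathbf{A}))$ to an automorphism $\phi\in \aut{\mathbf{K}}$, and take $\mathbf{D}$ to be the finite substructure generated by $\phi\circ r_0(\mathbf{B})\cup s_0(\mathbf{C})$.

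The main obstacle is the bookkeeping in the existence construction: the queue of extension tasks is \emph{dynamic}, since each task involves an embedding into a stage of the construction not yet built when the enumeration began, so one must interleave the enumeration of tasks with the construction of the chain and verify that a fair scheduling addresses every task arising from every finite substructure of the eventual $\mathbf{K}$. Once this scheduling is in place, the remaining steps are routine manipulations of partial isomorphisms and finite substructures.
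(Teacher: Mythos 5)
The paper states this as a \emph{Fact} and supplies no proof: it is Fra\"iss\'e's classical theorem and is treated as background. Your sketch is the standard textbook proof (equivalence of ultrahomogeneity and the extension property by back-and-forth, uniqueness by back-and-forth, existence by a chain/dovetailing construction using AP and JEP, and the converse by direct verification), and in outline it is correct, including your explicit flagging of the scheduling bookkeeping as the main delicate point of the existence construction. Your argument for AP in the converse direction is also right: the composite $s_0\circ g\circ f^{-1}\circ r_0^{-1}$ is a finite partial isomorphism of $\mathbf{K}$, extending it to an automorphism $\phi$ gives $\phi\circ r_0\circ f = s_0\circ g$, and one can take $\mathbf{D}$ to be the finite substructure on $\phi(r_0(\mathbf{B}))\cup s_0(\mathbf{C})$.

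Two places where you lean on hypotheses without naming them are worth making explicit. First, your existence argument begins ``enumerate $\mathcal{K}$ up to isomorphism as $\mathbf{D}_0,\mathbf{D}_1,\ldots$''; for this to be a sequence one needs $\mathcal{K}$ to have only countably many isomorphism types. This is a standard implicit requirement on Fra\"iss\'e classes (automatic when $L$ is countable), and the theorem is false without it, so it should be recorded. Second, your converse repeatedly invokes ``the finite substructure generated by'' a finite set; this is finite precisely because the paper has fixed $L$ to be a \emph{relational} language, and in the presence of function symbols one would instead need an explicit local-finiteness hypothesis on $\mathbf{K}$. Neither of these affects the correctness of your argument in the paper's setting, but both deserve a sentence in a complete write-up. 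Beyond that, the dovetailing of extension tasks is the only step that genuinely needs to be written out carefully; the rest is the routine back-and-forth machinery you describe.
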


Given a \fr class $\mathcal{K}$, we write $\flim{\mathcal{K}}$, the \emph{\fr limit} of $\mathcal{K}$, for the unique structure $\mathbf{K}$ as above. We say that $\mathbf{K}$ is a \emph{\fr structure} if $\mathbf{K}\cong \flim{\mathcal{K}}$ for some \fr class. Our interest in \fr structures stems from the following result.

\begin{fact}
For any \fr structure $\mathbf{K}$, $\aut{\mathbf{K}}$ is isomorphic to a closed subgroup of $S_\infty$. Conversely, every closed subgroup of $S_\infty$ is isomorphic to $\aut{\mathbf{K}}$ for some \fr structure $\mathbf{K}$.
\end{fact}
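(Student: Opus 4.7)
The statement has two halves. The forward direction amounts to verifying that $\aut{\mathbf{K}}$ is closed in $S_\infty = \mathrm{Sym}(X)$ when $X$ is the underlying set of $\mathbf{K}$. My plan is to take a sequence $g_n \in \aut{\mathbf{K}}$ with $g_n \to g$ pointwise in $S_\infty$ and check that $g$ preserves and reflects each basic relation $R$ of $\mathbf{K}$. Because pointwise convergence localizes on finite tuples, for any fixed $\bar a$ we eventually have $g_n(\bar a) = g(\bar a)$, which forces $R(\bar a) \iff R(g(\bar a))$. The reflection half follows once one observes that inversion is continuous on $S_\infty$, so $g_n^{-1} \to g^{-1}$ pointwise as well.

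For the converse, the idea is to equip $X$ with the \emph{canonical structure} encoding $G$: introduce a relational language $L_G$ with one predicate symbol $R_\mathcal{O}$ of arity $n$ for each $G$-orbit $\mathcal{O} \subseteq X^n$ (countably many in all, since $X$ is countable), and let $\mathbf{K}$ be the $L_G$-structure on $X$ interpreting each $R_\mathcal{O}$ as $\mathcal{O}$. By construction, every $g \in G$ is an automorphism of $\mathbf{K}$. For the reverse inclusion, given $h \in \aut{\mathbf{K}}$ and a finite $B = \{b_1,\ldots,b_n\} \subseteq X$, the tuple $(h(b_1),\ldots,h(b_n))$ lies in the same $G$-orbit as $(b_1,\ldots,b_n)$, because $h$ preserves the specific $R_\mathcal{O}$ picking out that orbit. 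Hence some $g \in G$ agrees with $h$ on $B$, exhibiting $h$ as a pointwise limit of elements of $G$; closedness of $G$ then gives $h \in G$.

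It then remains to verify that $\mathbf{K}$ is a \fr structure, which I would do by reading ultrahomogeneity directly off the construction: given an isomorphism $f : \mathbf{A} \to \mathbf{B}$ between finite substructures, the tuples enumerating $\mathbf{A}$ and $f(\mathbf{A})$ satisfy the same $R_\mathcal{O}$ and thus lie in the same $G$-orbit, so some $g \in G \subseteq \aut{\mathbf{K}}$ extends $f$. The converse half of Fact 2.2 then certifies that $\mathbf{K} = \flim{\age{\mathbf{K}}}$. I do not foresee a serious obstacle here; the only subtlety worth flagging is that $\aut{\mathbf{K}}$ and $G$ must be compared under \emph{the same} topology, namely pointwise convergence on $X$, so that closedness of $G$ translates into an equality of topological groups rather than merely of abstract groups.
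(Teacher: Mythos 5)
The paper states this as a background Fact and offers no proof of its own, so there is nothing to compare against; what you have written is the standard argument for the correspondence between closed subgroups of $S_\infty$ and automorphism groups of \fr structures, and it is correct. For the forward direction, passing to a pointwise-convergent sequence and checking each basic relation on finite tuples is exactly right (and is enough, as $S_\infty$ is Polish and hence first countable); the closing remark about inversion is actually superfluous, since $g\in S_\infty$ is already a bijection and the biconditional $R(\bar a)\Leftrightarrow R(g(\bar a))$ you derived gives preservation in both directions at once. For the converse, the canonical structure whose $n$-ary relations are the $G$-orbits on $X^n$ is the standard construction; you correctly use closedness of $G$ to get $\aut{\mathbf{K}}\subseteq G$, and read ultrahomogeneity directly off the fact that like-labelled tuples lie in a common $G$-orbit, after which the converse half of the paper's Fact 2.2 certifies that $\mathbf{K}$ is a \fr structure. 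One small point worth making explicit if you want the language to be a bona fide countable relational language (and the age essentially countable, as the existence half of Fact 2.2 implicitly requires) is that each $X^n$ is countable and therefore has at most countably many $G$-orbits, so $L_G$ is countable; you gesture at this but it deserves a sentence.
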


Throughout the rest of the paper, fix a \fr class $\mathcal{K}$ with \fr limit $\mathbf{K}$. Set $G = \aut{\mathbf{K}}$. We also fix an exhaustion $\mathbf{K} = \bigcup_n \mathbf{A}_n$, with each $\mathbf{A}_n\in \mathcal{K}$ and $\mathbf{A}_m\subseteq \mathbf{A}_n$ for $m\leq n$. Write $H_n = \{gG_n: g\in G\}$, where $G_n = G\cap N_{\mathbf{A}_n}$ is the pointwise stabilizer of $\mathbf{A}_n$; we can identify $H_n$ with $\emb{\mathbf{A}_n, \mathbf{K}}$, the set of embeddings of $\mathbf{A}_n$ into $\mathbf{K}$. Note that under this identification, we have $H_n = \bigcup_{N\geq n} \emb{\mathbf{A}_n, \mathbf{A}_N}$. For $g\in G$, we often write $g|_n$ for $gG_n$, and we write $i_n$ for $G_n$. The group $G$ acts on $H_n$ on the left; if $x\in H_n$ and $g\in G$, we have $g\cdot x = g\circ x$. For $m\leq n$, we let $i^n_m\in \emb{\mathbf{A}_m, \mathbf{A}_n}$ be the inclusion embedding.
\vspace{3 mm}

Each $f\in \emb{\mathbf{A}_m, \mathbf{A}_n}$ gives rise to a dual map $\hatf: H_n\rightarrow H_m$ given by $\hatf(x) = x\circ f$. 

\begin{prop}\mbox{}
\label{Amalgamation}
\begin{enumerate}
\item
For $f\in \emb{\mathbf{A}_m, \mathbf{A}_n}$, the dual map $\hatf: H_n\rightarrow H_m$ is surjective.
\item
For every $f\in \emb{\mathbf{A}_m, \mathbf{A}_n}$, there is $N\geq n$ and $h\in \emb{\mathbf{A}_n, \mathbf{A}_N}$ with $h\circ f = i^N_m$.
\end{enumerate}
\end{prop}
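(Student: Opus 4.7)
The plan is to prove part (1) by reducing to the Extension Property of the Fraïssé limit, and then deduce part (2) as an immediate corollary by choosing a specific embedding and localizing its image.

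For part (1), fix $f \in \emb{\mathbf{A}_m, \mathbf{A}_n}$ and $y \in H_m = \emb{\mathbf{A}_m, \mathbf{K}}$; I need to produce $x \in H_n$ with $x \circ f = y$. Let $\mathbf{A}' := f(\mathbf{A}_m) \subseteq \mathbf{A}_n$, which is a finite substructure of $\mathbf{A}_n$ isomorphic to $\mathbf{A}_m$ via $f$. Set $y' := y \circ f^{-1} : \mathbf{A}' \to \mathbf{K}$, which is an embedding. Now apply the Extension Property to the inclusion $\mathbf{A}' \subseteq \mathbf{A}_n$ and the embedding $y' : \mathbf{A}' \to \mathbf{K}$ to extract an embedding $x : \mathbf{A}_n \to \mathbf{K}$ with $x|_{\mathbf{A}'} = y'$. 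Then for every $a \in \mathbf{A}_m$, $x \circ f(a) = y'(f(a)) = y(f^{-1}(f(a))) = y(a)$, so $\hatf(x) = x \circ f = y$, as desired. (An equivalent route is to use ultrahomogeneity directly: pick an automorphism $g \in G$ extending the partial isomorphism $y \circ f^{-1} : \mathbf{A}' \to y(\mathbf{A}_m)$ between finite substructures of $\mathbf{K}$, and let $x = g|_{\mathbf{A}_n}$.)

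For part (2), apply part (1) with the particular choice $y = i_m : \mathbf{A}_m \hookrightarrow \mathbf{K}$, the inclusion embedding (i.e., the coset $G_m$ itself). This yields $x \in H_n = \emb{\mathbf{A}_n, \mathbf{K}}$ with $x \circ f$ equal to the inclusion of $\mathbf{A}_m$ into $\mathbf{K}$. Since $x(\mathbf{A}_n)$ is a finite subset of $\mathbf{K} = \bigcup_N \mathbf{A}_N$, choose $N \geq n$ large enough that $x(\mathbf{A}_n) \subseteq \mathbf{A}_N$. Then $x$ factors as an embedding $h : \mathbf{A}_n \to \mathbf{A}_N$, and the condition $x \circ f = i_m$ in $\mathbf{K}$ is precisely the statement $h \circ f = i^N_m$ inside $\mathbf{A}_N$.

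The only real subtlety, rather than an obstacle, is keeping straight the two identifications in play: embeddings $\mathbf{A}_m \to \mathbf{K}$ as left cosets of $G_m$ (so that $i_m$ corresponds to $G_m$ itself), and the compatibility between inclusion $\mathbf{A}_m \hookrightarrow \mathbf{A}_n \hookrightarrow \mathbf{K}$ viewed at finite versus infinite scale. Once these are fixed, both statements are essentially restatements of the Extension Property, with part (2) being the expected finite-level witness that the dual maps in the inverse system $\{H_n\}$ reflect the amalgamation structure of $\mathcal{K}$.
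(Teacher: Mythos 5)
Your proof is correct and follows essentially the same route as the paper: part (1) is the Extension Property, and part (2) is obtained by taking $y = i_m$ and truncating the resulting embedding to a finite stage $\mathbf{A}_N$ (the paper phrases this via ultrahomogeneity, producing $g \in G$ with $g \circ f = i_m$ and then restricting $g|_n$, which is the same localization step). Deducing (2) from (1) rather than invoking ultrahomogeneity afresh is a minor organizational difference, not a different argument.
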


\begin{proof}
Item 1 is an immediate consequence of the extension property. For item 2, use ultrahomogeneity to find $g\in G$ with $g\circ f = i_m$. Let $N\geq n$ be large enough so that $\ran{g|_n}\subseteq \mathbf{A}_N$, and set $h = g|_n$.
\end{proof}

\subsection{Topological dynamics and the Samuel compactification}
We now want to investigate the dynamical properties of the group $G$. Though many of the following definitions and facts hold for more general topological groups, we will only consider the case $G = \aut{\mathbf{K}}$.

A \emph{$G$-flow} is a compact Hausdorff space $X$ equipped with a continuous right $G$-action $a: X\times G\rightarrow X$. We suppress the action $a$ by writing $a(x,g) = x\cdot g$. If $X$ is a $G$-flow, a subflow $Y\subseteq X$ is a closed subspace which is $G$-invariant. The $G$-flow $X$ is \emph{minimal} if it contains no proper subflow; equivalently, $X$ is minimal iff every orbit is dense. By Zorn's lemma, every $G$-flow contains a minimal subflow. 

Given two $G$-flows $X$ and $Y$, a \emph{$G$-map} $f$ is a continuous map $f: X\rightarrow Y$ satisfying $f(x\cdot g) = f(x)\cdot g$ for every $g\in G$. A $G$-flow $X$ is \emph{universal} if for every minimal $G$-flow $Y$, there is a $G$-map $f: X\rightarrow Y$. The following is an important fact of topological dynamics.

\begin{fact}
There is up to $G$-flow isomorphism a unique $G$-flow $M(G)$ which is both universal and minimal. We call $M(G)$ the \emph{universal minimal flow} of $M(G)$. Any $G$-map from $M(G)$ to $M(G)$ is an isomorphism.
\end{fact}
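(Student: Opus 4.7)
The plan is to realize $M(G)$ as a minimal subflow of the Samuel compactification $S(G)$, derive the endomorphism rigidity claim from the semigroup structure on $S(G)$, and then obtain uniqueness as a formal consequence of rigidity.

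\emph{Existence.} I would first take $S(G)$, the Samuel compactification of $G$ (which the paper will construct in detail later). It carries a canonical continuous right $G$-action making it a $G$-flow, and satisfies the universal property that for any $G$-flow $Y$ and any $y_0 \in Y$, the orbit map $g \mapsto y_0 \cdot g$ extends to a continuous $G$-map $S(G) \to Y$ sending the identity of $G$ to $y_0$. Applying Zorn's Lemma to the collection of subflows of $S(G)$ ordered by reverse inclusion---nonempty intersections of descending chains are nonempty and closed by compactness, and $G$-invariance is preserved under intersection---produces a minimal subflow $M \subseteq S(G)$. For any minimal $G$-flow $Y$, composing $M \hookrightarrow S(G)$ with a $G$-map $S(G) \to Y$ gives a $G$-map $M \to Y$, so $M$ is simultaneously minimal and universal.

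\emph{Endomorphism rigidity and uniqueness.} Next I would invoke the structure of $S(G)$ as a compact left-topological semigroup in which the minimal subflows are precisely the minimal right ideals and in which left multiplication by any fixed element is continuous and commutes with the right $G$-action. Let $\phi : M \to M$ be a $G$-map. Surjectivity is automatic, since $\phi(M)$ is a closed $G$-invariant subset of the minimal flow $M$. For injectivity, pick an Ellis--Numakura idempotent $u \in M$; a standard computation with minimal right ideals gives $u \cdot x = x$ for every $x \in M$. Set $v := \phi(u)$ and use the structure theory of minimal right ideals to produce $u' \in S(G)$ with $u' \cdot v = u$ and $u' \cdot M \subseteq M$. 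Then $\phi' := L_{u'}|_M$ is a continuous $G$-map $M \to M$, and on the dense orbit $u \cdot G$ we compute $\phi'(\phi(u \cdot g)) = u' \cdot v \cdot g = u \cdot g$; by continuity $\phi' \circ \phi = \mathrm{id}_M$, so $\phi$ is injective. Uniqueness of $M(G)$ is now purely formal: given two universal minimal flows $M, M'$, universality produces $G$-maps $\phi : M \to M'$ and $\psi : M' \to M$; their compositions are $G$-endomorphisms, hence isomorphisms by rigidity, making $\phi$ and $\psi$ mutually inverse.

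The \emph{main obstacle} is the injectivity half of the endomorphism rigidity statement. Existence via Zorn, surjectivity via minimality, and the passage from rigidity to uniqueness are all soft topological-dynamical manipulations. Injectivity, by contrast, is the one place where the proof genuinely needs both the compact left-topological semigroup structure on $S(G)$ and the existence of idempotents in minimal right ideals; without these algebraic inputs there is no evident mechanism to prevent a $G$-endomorphism of $M$ from collapsing distinct points.
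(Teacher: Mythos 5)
The paper states this as background fact with no proof given, so there is no ``paper's argument'' to compare against; your outline is the standard Ellis-style proof and is the right approach. Existence via Zorn on subflows of $S(G)$, surjectivity of $G$-endomorphisms via minimality, and the passage from endomorphism rigidity to uniqueness are all correctly reduced to the injectivity step.

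There is, however, a genuine (if small) gap in the injectivity argument as written. You fix an idempotent $u\in M$, set $v=\phi(u)$, and then assert that the structure theory produces $u'$ with $u'\cdot v=u$. This equation need not be solvable for the $u$ you chose. Recall that $M$ decomposes as the disjoint union of groups $Mw$ over its idempotents $w$, and for any $v\in M$ one has $Mv=Mw$ where $w$ is the unique idempotent with $v\in Mw$; in particular $u'v=u$ forces $u\in Mw$, i.e.\ $u=w$. Since $v=\phi(u)$ is determined after you pick $u$, there is no a priori reason that $v$ lands in the group component indexed by your chosen $u$. The fix is minor and two-fold: either note that $\phi(y)=vy$ for all $y\in M$ (using $y=uy$), let $w$ be the idempotent with $v\in Mw$, and take $u'=v^{-1}$ to be the group inverse of $v$ in $Mw$, so that $u'v=w$ is again a left identity for $M$ and hence $u'vy=y$; or observe first that $\phi(w)=\phi(uw)=\phi(u)w=vw=v$, so that replacing the initial idempotent $u$ by $w$ makes your equation $u'v=w$ solvable as stated. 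Either way the left inverse $L_{u'}|_M$ exists and is a continuous $G$-map, so $\phi$ is injective and the rest of your argument goes through unchanged.
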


One way of exhibiting the universal minimal flow is to consider the closely related notion of an ambit. A $G$-ambit is a pair $(X, x_0)$ with $X$ a $G$-flow and $x_0\in X$ a distinguished point with dense orbit. If $(X, x_0)$ and $(Y, y_0)$ are $G$-ambits, a \emph{map of ambits} $f$ is a $G$-map $f: X\rightarrow Y$ with $f(x_0) = y_0$. Note that there is at most one map of ambits from $(X, x_0)$ to $(Y, y_0)$, and such a map is always surjective.

\begin{fact}
There is up to $G$-ambit isomorphism a unique $G$-ambit $(S(G), 1)$ which admits a map of $G$-ambits onto every other $G$-ambit. We call $(S(G), 1)$ the \emph{greatest $G$-ambit}. The space $S(G)$ is the \emph{Samuel compactification} of $G$.
\end{fact}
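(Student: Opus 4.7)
The proof splits cleanly into uniqueness and existence.

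For uniqueness, suppose $(S_1, 1_1)$ and $(S_2, 1_2)$ are both $G$-ambits with the ``greatest'' property. Applying the property of each to the other, we obtain maps of ambits $\pi: S_1 \to S_2$ and $\sigma: S_2 \to S_1$. The composition $\sigma \circ \pi: S_1 \to S_1$ is a map of ambits $(S_1, 1_1) \to (S_1, 1_1)$, and so is $\mathrm{id}_{S_1}$. By the uniqueness clause in the remark immediately preceding the statement, the two must coincide. Similarly $\pi \circ \sigma = \mathrm{id}_{S_2}$, so $\pi$ is an ambit isomorphism.

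For existence, I would perform the standard product-of-ambits construction, the main preliminary being a cardinality bound on $G$-ambits. Concretely: there is a cardinal $\kappa$ such that every $G$-ambit has an isomorphic copy of cardinality $\leq \kappa$. For $G = \aut{\mathbf{K}}$, which is Polish, one can see this by noting that for any $G$-ambit $(Y, y_0)$, the assignment $F \mapsto (g \mapsto F(y_0 \cdot g))$ embeds $C(Y)$ into the bounded right-uniformly continuous functions on $G$; the latter has cardinality at most $2^{\aleph_0}$, since each such function is determined by its restriction to a countable dense subset of $G$. Via Gelfand duality (or via $Y \hookrightarrow [0,1]^{C(Y, [0,1])}$), this bounds $|Y|$ by $2^{2^{\aleph_0}}$, so $\kappa = 2^{2^{\aleph_0}}$ suffices.

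Next, fix a set $\mathcal{F}$ of representatives of all isomorphism classes of $G$-ambits of cardinality $\leq \kappa$. Form the product flow $P = \prod_{(Y, y) \in \mathcal{F}} Y$ with the diagonal right $G$-action, which is continuous as a product of continuous actions. Let $p = (y)_{(Y, y) \in \mathcal{F}} \in P$ be the tuple of distinguished points, and set $S(G) = \overline{p \cdot G} \subseteq P$. Then $(S(G), p)$ is a $G$-ambit: it is closed and $G$-invariant by construction, and $p$ has dense orbit by definition. Given an arbitrary $G$-ambit $(Y, y_0)$, fix a representative $(Y', y') \in \mathcal{F}$ and an ambit isomorphism $\phi: (Y', y') \to (Y, y_0)$. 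The coordinate projection $\mathrm{pr}_{Y'}: P \to Y'$ is a continuous $G$-map sending $p$ to $y'$; its restriction to $S(G)$ is therefore an ambit map, and composing with $\phi$ gives the required ambit map $(S(G), p) \to (Y, y_0)$.

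The main obstacle is the cardinality bound on ambits; once that is in hand, the product construction is formal. One can view this bound as the very reason that the Samuel compactification exists: it converts the a priori proper class of all $G$-ambits into a set that one can take a product over. An alternative route, which I would mention but not pursue here, is to construct $S(G)$ directly as the Gelfand spectrum of the $C^*$-algebra of bounded right-uniformly continuous functions on $G$, and verify the universal property via uniform-continuous extension; this bypasses the product construction at the cost of developing more machinery.
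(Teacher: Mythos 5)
Your argument is correct, and it supplies a proof that the paper deliberately omits: the result is stated as a background Fact, and the paper instead proceeds by explicit construction. Your route is the classical abstract one — bound the size of any $G$-ambit using that $G$ is Polish (so $C(Y)$ injects into $C_b(G)$, which has cardinality at most $2^{\aleph_0}$ by separability), fix a set of representatives, and take the orbit closure of the diagonal point in the product flow; uniqueness follows from the preceding remark that ambit maps, when they exist, are unique, so the two compositions are forced to be identities. The paper instead realizes $S(G)$ concretely as the inverse limit $\varprojlim \beta H_n$ along the maps $\tildei{m}{n}$, citing Pestov [P, Cor.~3.3] for the identification with the Samuel compactification and [Z, Thm.~6.3] for the greatest-ambit property. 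The trade-off is the usual one: your construction works for any topological group (with a suitable $\kappa$) but is opaque, whereas the paper's inverse-limit description is specific to $G = \aut{\mathbf{K}}$ with a fixed exhaustion and is precisely the combinatorial handle on $S(G)$ that the rest of the paper runs on. A minor simplification to your cardinality step: uniform continuity plays no role. Ordinary continuity plus separability of $G$ already gives $|C(Y)| \le 2^{\aleph_0}$, since any $F \in C(Y)$ is determined by its restriction to the dense orbit $y_0\cdot G$, and hence by its values on the image of a countable dense subset of $G$; the uniform structure is only needed if one wants to identify the image of $C(Y)$ exactly, which your argument does not require.
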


Now suppose $M\subseteq S(G)$ is a subflow. If $Y$ is any minimal $G$-flow, pick any $y_0\in Y$ and form the ambit $(Y, y_0)$. Let $f: S(G)\rightarrow Y$ be the unique map of ambits. Then $f|_M$ is a $G$-map. It follows that $M\cong M(G)$.

We now proceed with an explicit construction of $S(G)$. Let $f\in \emb{\mathbf{A}_m, \mathbf{A}_n}$. The dual map $\hatf$ extends to a continuous map $\tilde{f}: \beta H_n\rightarrow \beta H_m$, where for a discrete space $X$, $\beta X$ is the space of ultrafilters on $X$. If $p\in \beta H_n$ and $f\in \emb{\mathbf{A}_m, \mathbf{A}_n}$, we will sometimes write $p\cdot f$ for $\tilde{f}(p)$. Form the inverse limit $\varprojlim \beta H_n$ along the maps $\tildei{m}{n}$. We can identify $G$ with a dense subspace of $\varprojlim \beta H_n$ by associating to each $g\in G$ the sequence of ultrafilters principal on $g|_n$. The space $\varprojlim \beta H_n$ turns out to be the Samuel compactification $S(G)$ (see Corollary 3.3 in [P]).

To see that $S(G)$ is the greatest ambit, we need to exhibit a right $G$-action on $S(G)$. This might seem unnatural at first; after all, the left $G$-action on each $H_n$ extends to a left $G$-action on $\beta H_n$, and the maps $\tildei{m}{n}$ are all $G$-equivariant, giving us a left $G$-action on $S(G)$. Indeed, let $G_d$ be the group $G$ with the discrete topology; then the map $\pi_m: G_d\rightarrow H_m$ given by $\pi_m(g) = gG_m$ extends to a continuous map $\tilde{\pi}_m: \beta G_d\rightarrow H_m$. As $\pi_m = \tildei{m}{n}\circ \pi_n$ for $n\geq m$, we obtain a map $\pi: \beta G_d\rightarrow S(G)$, and the left action just described on $S(G)$ makes $\pi$ $G$-equivariant, where $\beta G_d$ is given its standard left $G$-action. The problem is that the left action is not continuous when $G$ is given its Polish topology. However, $\beta G_d$ also comes with a natural right $G$-action, and there is a unique way to equip $S(G)$ with a right $G$-action to make the map $\pi$ equivariant. This right $G$-action on $S(G)$  is continuous. For $\alpha\in \varprojlim \beta H_n$, $g\in G$, $m\in \mathbb{N}$, and $S\subseteq H_m$, we have
\begin{align*}
S\in \alpha g(m) \Leftrightarrow \{x\in H_n: x\circ g|_m\in S\}\in \alpha(n)
\end{align*}
where $n\geq m$ is large enough so that $\ran{g|_m}\subseteq \mathbf{A}_n$. Notice that if $g|_m = h|_m = f$, then $\alpha g(m) = \alpha h(m) := \alpha\cdot f := \lambda_m^\alpha(f)$. By distinguishing the point $1\in \varprojlim \beta H_n$ with $1(m)$ principal on $i_m$, we endow $S(G)$ with the structure of a $G$-ambit, and $(S(G), 1)$ is the greatest ambit (see Theorem 6.3 in [Z]). 

Using the universal property of the greatest ambit, we can define a left-topological semigroup structure on $S(G)$: Given $\alpha$ and $\gamma$ in $\varprojlim \beta H_n$, $m\in \mathbb{N}$, and $S\subseteq H_m$, we have
\begin{align*}
S\in \alpha\gamma(m) \Leftrightarrow \{f\in H_m: S\in \alpha\cdot f\}\in \gamma(m).
\end{align*}

If $\alpha\in S(G)$ and $S\subseteq H_m$, a useful shorthand is to put $\alpha^{-1}(S) = \{f\in H_m: S\in \alpha\cdot f\}$. Then the semigroup multiplication can be written as $S\in \alpha\gamma(m)$ iff $\alpha^{-1}(S)\in \gamma(m)$. Notice that for fixed $\alpha$, $\alpha\gamma(m)$ depends only on $\gamma(m)$; indeed, if $\alpha\in \varprojlim \beta H_n$, $p\in \beta H_m$, and $S\subseteq H_m$, we have $S\in \alpha\cdot p$ iff $\alpha^{-1}(S)\in p$. In fact, $\alpha\cdot p = \tilde\lambda_m^\alpha(p)$, where the map $\tilde\lambda_m^\alpha$ is the continuous extention of $\lambda_m^\alpha$ to $\beta H_m$. 

By continuity of left-multiplication, subflows of the $G$-flow $\varprojlim \beta H_n$ are exactly the closed right ideals of the semigroup $\varprojlim \beta H_n$. 

Recall that if $S$ is a semigroup, then $u\in S$ is an \emph{idempotent} if $u\cdot u = u$. 

\begin{fact}
In compact left-topological semigroups, minimal right ideals exist, are always compact, and always contain idempotents. If $Y, Y'\subseteq \varprojlim \beta H_n$ are minimal right ideals, then any $G$-map $\phi: Y\rightarrow Y'$ is of the form $\phi(y) = \alpha y$ for some $\alpha\in Y'$, and each such $G$-map is an isomorphism.
\end{fact}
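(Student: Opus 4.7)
For the first claim, the plan is to work with an abstract compact left-topological semigroup $S$ and run a Zorn-plus-Ellis--Numakura argument. For any $s \in S$ the set $sS$ is a right ideal that is closed because left multiplication by $s$ is continuous, so closed nonempty right ideals exist. Zorn's lemma applied to closed nonempty right ideals (compactness giving nested nonempty intersections) produces a minimal one $M$. If $R \subseteq M$ is any right ideal and $s \in R$, then the closed right ideal $sS \subseteq R \subseteq M$ equals $M$ by minimality, forcing $R = M$; thus $M$ is minimal among all right ideals, and it is compact as a closed subset of $S$. For an idempotent in $M$, apply Zorn again to obtain a minimal closed nonempty subsemigroup $T \subseteq M$; for any $t \in T$, the set $tT$ is a closed subsemigroup (closed because left multiplication by $t$ is continuous), so $tT = T$ by minimality, yielding $s \in T$ with $ts = t$. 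The set $\{r \in T : tr = t\}$ is then closed, nonempty, and a subsemigroup, so it equals $T$, giving $t^2 = t$.

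For the second claim, fix minimal right ideals $Y, Y' \subseteq S(G)$, a $G$-map $\phi: Y \to Y'$, and an idempotent $u \in Y$. I would first show that any idempotent $v$ in a minimal right ideal $Z$ is a left identity on $Z$: since $vZ$ is a closed right ideal contained in $Z$, minimality forces $vZ = Z$, and writing $z = vw$ then gives $vz = v^2 w = vw = z$. Setting $\alpha = \phi(u) \in Y'$, I would invoke the universal property of the greatest ambit. The map $p \mapsto up$ is a continuous, $G$-equivariant map $S(G) \to Y$ sending $1$ to $u$, so by uniqueness of ambit maps it is the ambit map $(S(G), 1) \to (Y, u)$; likewise $p \mapsto \alpha p$ is the ambit map $(S(G), 1) \to (Y', \alpha)$. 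The composition of $\phi$ with $p \mapsto up$ is another ambit map $(S(G), 1) \to (Y', \alpha)$, so by uniqueness $\phi(up) = \alpha p$ for all $p \in S(G)$; specializing to $p \in Y$ and using $up = p$ yields $\phi(p) = \alpha p$.

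To see $\phi$ is an isomorphism, first note that $\phi(Y) = \alpha Y$ is a closed right ideal contained in $Y'$, hence equal to $Y'$ by minimality, so $\phi$ is surjective. Picking an idempotent $u' \in Y'$ and some $y_0 \in Y$ with $\alpha y_0 = u'$, define $\psi: Y' \to Y$ by $\psi(y') = y_0 y'$; this map is continuous and $G$-equivariant. Since $u'$ is a left identity on $Y'$, $\phi(\psi(y')) = \alpha y_0 y' = u' y' = y'$, so $\phi \circ \psi = \mathrm{id}_{Y'}$; then $\psi \circ \phi$ restricts to the identity on $\psi(Y')$, which is a closed nonempty $G$-invariant subset of the minimal flow $Y$ and therefore equals $Y$, so $\psi \circ \phi = \mathrm{id}_Y$ as well, and $\phi$ is an isomorphism. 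The main obstacle is the identification of $\phi$ with left multiplication by $\alpha$: the ambit-uniqueness trick depends essentially on the left-topological structure (so that left multiplication by $u$ is continuous) and on explicitly recognizing the ambit map $(S(G), 1) \to (Y, u)$ as $p \mapsto up$; once that step is in place, the isomorphism claim follows symmetrically from surjectivity and the fact that every idempotent is a left identity on its minimal right ideal.
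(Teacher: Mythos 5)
Your proof is correct, and there is nothing in the paper to compare it against: the statement appears as a \emph{Fact}, i.e.\ standard background on compact left-topological semigroups and the greatest ambit, and the paper supplies no argument for it. What you have written is the canonical route: the Zorn/Ellis--Numakura argument for a compact minimal right ideal containing an idempotent; the observation that an idempotent $u$ in a minimal right ideal $Y$ is a left identity on $Y$ (via $uY=Y$); the identification of $p\mapsto up$ with the unique ambit map $(S(G),1)\to(Y,u)$ using $uS(G)=Y$ and $u1=u$, and then ambit-uniqueness applied to $\phi\circ(p\mapsto up)$ to get $\phi(p)=\phi(u)p$ on $Y$; and finally surjectivity by minimality together with a two-sided inverse $y'\mapsto y_0y'$ built from an idempotent $u'\in Y'$. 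All the small points you'd need to check do go through: $tT$ and $\{r\in T:tr=t\}$ really are closed subsemigroups, $\alpha Y$ really is a closed right ideal inside $Y'$, and $\psi(Y')$ really is a nonempty closed $G$-invariant subset of the minimal flow $Y$, hence all of $Y$.
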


Furthermore, write $Y = \varprojlim Y_n$, $Y' = \varprojlim Y'_n$, with $Y_m$ and $Y'_m$ compact subsets of $\beta H_m$. Given $\alpha\in Y'$ as above, we note that since $\alpha y(m)$ depends only on $y(m)$, the isomorphism $\phi$ gives rise to homeomorphisms $\phi_m: Y_m\rightarrow Y'_m$ satisfying $\tildei{m}{n}\circ \phi_n = \phi_m\circ \tildei{m}{n}$ for $n\geq m$.

Some quick remarks about notation are in order. Given $f\in \emb{\mathbf{A}_m, \mathbf{A}_n}$ and $S\subseteq H_m$, we will often want to consider the set $(\hat{f})^{-1}(S)$; instead of ``dualling twice,'' we instead write $f(S)$. Note that if $x\in \emb{\mathbf{A}_n, \mathbf{A}_N}$, then we have $x(f(S)) = (x\circ f)(S)$, so this notation is justified. We also remark that whenever we write $\varprojlim Y_m$ for $Y_m \subseteq H_m$ closed, we are implicitly assuming that $\tildei{m}{n}(Y_n) = Y_m$.

\section{The family of thick sets}

If $X$ is an infinite set, a \emph{family} $\mathcal{S}$ on $X$, or just $(X, \mathcal{S})$, is a non-trivial collection of subsets of $X$ closed  upwards under inclusion. By non-trivial, we demand that $X\in \mathcal{S}$ and $\emptyset \not\in \mathcal{S}$. An $\mathcal{S}$-filter $\mathcal{F}$ on $X$ is any filter $\mathcal{F}$ on $X$ with $\mathcal{F}\subseteq \mathcal{S}$. An $\mathcal{S}$-ultrafilter is any maximal $\mathcal{S}$-filter. By Zorn, any $\mathcal{S}$-filter can be extended to some $\mathcal{S}$-ultrafilter. Write $\beta(\mathcal{S})$ for the collection of $\mathcal{S}$-ultrafilters. We borrow the term \emph{family} from Akin and Glasner [AG], and the definition of an $\mathcal{S}$-filter is implicit in [B]. 
 
If $(X,\mathcal{S})$ is a family and $f: X\rightarrow Y$ is a map, we can push forward the family $\mathcal{S}$ to the family $f(\mathcal{S}) := \{B\subseteq Y: f^{-1}(B)\in \mathcal{S}\}$. In a similar way, any $\mathcal{S}$-filter $\mathcal{F}$ pushes forward to an $f(\mathcal{S})$-filter $f(\mathcal{F})$. We call a map $f: (X, \mathcal{S})\rightarrow Y$ \emph{strong} if $f^{-1}(f(A))\not\in \mathcal{S}$ whenever $A\not\in\mathcal{S}$; we call $f$ \emph{regular} if $f(\beta(\mathcal{S}))\subseteq \beta(f(\mathcal{S}))$, i.e.\ if the push forward of any $\mathcal{S}$-ultrafilter is an $f(\mathcal{S})$-ultrafilter. Note that we always have $\beta(f(\mathcal{S}))\subseteq f(\beta(\mathcal{S}))$, since given $q\in \beta(f(\mathcal{S}))$, any $p\in \beta(\mathcal{S})$ extending the $\mathcal{S}$-filter generated by $\{f^{-1}(B): B\in q\}$ will satisfy $f(p) = q$. Here is a quick proposition to give some intuition about the definitions.

\begin{prop}
Any strong map is also regular.
\end{prop}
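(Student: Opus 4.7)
The plan is to prove the contrapositive at the level of maximality: start with $p \in \beta(\mathcal{S})$, and if $f(p)$ fails to be maximal in $f(\mathcal{S})$, use the witness to produce a proper $\mathcal{S}$-filter extension of $p$, contradicting the assumption that $p$ is maximal.

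First I would record that $f(p)$ is automatically an $f(\mathcal{S})$-filter. Indeed, $f(p) = \{B\subseteq Y: f^{-1}(B)\in p\}$ is closed under finite intersections and supersets because $f^{-1}$ commutes with these operations, and $f(p)\subseteq f(\mathcal{S})$ because $p\subseteq \mathcal{S}$. So the only thing in question is maximality.

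Suppose then that $f(p)\notin \beta(f(\mathcal{S}))$, and fix $B\in f(\mathcal{S})\setminus f(p)$ such that $f(p)\cup \{B\}$ generates an $f(\mathcal{S})$-filter; equivalently, $B\cap B'\in f(\mathcal{S})$ for every $B'\in f(p)$. The plan is to show that $f^{-1}(B)$ can likewise be adjoined to $p$. Since $B\notin f(p)$, we have $f^{-1}(B)\notin p$, so it will suffice (by upward closure of $\mathcal{S}$ together with the fact that $p$ is closed under finite intersections) to verify $A\cap f^{-1}(B)\in \mathcal{S}$ for each $A\in p$.

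The key algebraic identity is $f\bigl(A\cap f^{-1}(B)\bigr) = f(A)\cap B$, which holds for any map $f$ and arbitrary $A\subseteq X$, $B\subseteq Y$. Applying $f^{-1}$ gives
\begin{equation*}
f^{-1}\bigl(f(A\cap f^{-1}(B))\bigr) = f^{-1}(f(A))\cap f^{-1}(B).
\end{equation*}
Now since $A\in p\subseteq \mathcal{S}$ and $A\subseteq f^{-1}(f(A))$, we have $f^{-1}(f(A))\in p$, so $f(A)\in f(p)$. By our assumption on $B$, then, $B\cap f(A)\in f(\mathcal{S})$, which unpacked says exactly that $f^{-1}(f(A))\cap f^{-1}(B)\in \mathcal{S}$. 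Combining with the displayed identity, $f^{-1}\bigl(f(A\cap f^{-1}(B))\bigr)\in \mathcal{S}$, and the contrapositive of strongness applied to $A\cap f^{-1}(B)$ delivers $A\cap f^{-1}(B)\in \mathcal{S}$. This completes the contradiction.

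The one step that requires care rather than routine bookkeeping is precisely the identity $f(A\cap f^{-1}(B)) = f(A)\cap B$ and the realization that strongness should be invoked in its contrapositive form, applied to the set $A\cap f^{-1}(B)$ rather than to $A$ directly; once that maneuver is seen the rest is formal.
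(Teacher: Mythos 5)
Your argument is correct and is essentially the paper's proof run in the contrapositive direction: the paper assumes $p$ maximal, picks $B\in f(\mathcal{S})\setminus f(p)$, uses maximality of $p$ to find $A\in p$ with $A\cap f^{-1}(B)\notin\mathcal{S}$, and then applies strongness directly to $A\cap f^{-1}(B)$ together with the identity $f\bigl(A\cap f^{-1}(B)\bigr)=f(A)\cap B$ to conclude $f(A)\cap B\notin f(\mathcal{S})$. You instead suppose $f(p)$ is not maximal and show $f^{-1}(B)$ can be adjoined to $p$, invoking the same identity and the same application of strongness (in contrapositive form) to the same set $A\cap f^{-1}(B)$ — so the content is the same, just with the quantifier and the implication both flipped.
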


\begin{proof}
\label{StrongReg}
Suppose $f: (X, \mathcal{S})\rightarrow Y$ is strong, and let $p\in \beta(\mathcal{S})$. To show  $f(p)\in \beta(f(\mathcal{S}))$, let $B\subseteq Y$ with $B\in f(\mathcal{S})\setminus f(p)$. Then $f^{-1}(B) \in \mathcal{S}\setminus p$. As $p\in \beta(\mathcal{S})$, there must be $A\in p$ with $A\cap f^{-1}(B)\not\in \mathcal{S}$. As $f$ is strong, we have $f(A\cap f^{-1}(B)) = f(A)\cap B\not\in f(\mathcal{S})$. As $f(A)\in f(p)$, we have shown that $f(p)\in \beta(f(\mathcal{S}))$.
\end{proof}

We will be interested in studying the family $\mathcal{T}_m$ of thick subsets of $H_m$, and in particular understanding the members of $\beta(\mathcal{T}_m)$. Our definitions of thick, syndetic, and piecewise syndetic will most closely follow the second set of definitions from the introduction. Indeed, since $G$ acts on the left on $H_m$, this induces a right shift action on $2^{H_m}$ \newline

\begin{defin}\mbox{}
\label{MainDef}
\begin{itemize}
\item
$T\subseteq H_m$ is \emph{thick} iff $\chi_{H_m}\in \overline{\chi_T\cdot G}$,
\item
$S\subseteq H_m$ is \emph{syndetic} iff $\chi_\emptyset\not\in \overline{\chi_S\cdot G}$,
\item
$P\subseteq H_m$ is \emph{piecewise syndetic} iff there is syndetic $S\subseteq H_m$ with $\chi_S\in \overline{\chi_P\cdot G}$.
\end{itemize}
\end{defin}

Each of these notions is upwards closed, so forms a family. Write $\mathcal{S}_m$ and $\mathcal{P}_m$ for the families of syndetic and piecewise syndetic subsets of $H_m$, respectively. Note that $T\subseteq H_m$ is thick iff $H_m\setminus T$ is not syndetic, and vice versa.

Let us briefly return to the setting of the introduction, where $G$ is a discrete group. Definition \ref{ComboDef} uses the right $G$-action on itself; given $T\subseteq G$, we need the right action to describe the sets $Tg^{-1}$ for $g\in G$. But the definition of ``thick'' given there also includes a more combinatorial characterization, where $T$ is thick iff for every $E\in \fin{G}$, there is $g\in G$ with $gE\subseteq T$. When $G = \mathbb{Z}$ for instance, this amounts to saying that thick subsets of the integers are exactly those subsets containing arbitrarily long intervals. Since this characterization uses the left $G$-action on itself, we can hope to generalize it. We now return to the normal setting of the paper, where $G = \aut{\mathbf{K}}$.

\begin{prop}
\label{Combinatorial}
$T\subseteq H_m$ is thick iff for every $n\geq m$, there is $x\in H_n$ with \newline $x\circ \emb{\mathbf{A}_m, \mathbf{A}_n}\subseteq T$.
\end{prop}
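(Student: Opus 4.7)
The plan is to first unpack the dynamical definition of thickness in terms of a basic neighborhood condition in $2^{H_m}$, and then bridge between elements $g \in G$ and elements $x \in H_n$ via restriction and ultrahomogeneity.

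Specifically, because $2^{H_m}$ carries the product topology, $\chi_{H_m}\in \overline{\chi_T\cdot G}$ is equivalent to saying that for every finite $F\subseteq H_m$ there is $g\in G$ with $(\chi_T\cdot g)(f)=1$ for all $f\in F$, i.e.\ $g\circ f\in T$ for all $f\in F$. So the proposition reduces to proving: $T$ is thick in this combinatorial sense (every finite $F$ can be translated into $T$) iff for every $n\geq m$ there is $x\in H_n$ with $x\circ \emb{\mathbf{A}_m,\mathbf{A}_n}\subseteq T$.

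For the forward direction, given $n\geq m$, apply the combinatorial condition to the finite set $F=\emb{\mathbf{A}_m,\mathbf{A}_n}$ to obtain $g\in G$ with $g\circ \emb{\mathbf{A}_m,\mathbf{A}_n}\subseteq T$, and take $x:=g|_n\in H_n$; then $x\circ f = g\circ f$ for each $f\in \emb{\mathbf{A}_m,\mathbf{A}_n}$, which gives the desired inclusion. For the converse, given an arbitrary finite $F\subseteq H_m$, choose $n\geq m$ large enough that $\ran{f}\subseteq \mathbf{A}_n$ for every $f\in F$, so that $F\subseteq \emb{\mathbf{A}_m,\mathbf{A}_n}$. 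By hypothesis pick $x\in H_n$ with $x\circ \emb{\mathbf{A}_m,\mathbf{A}_n}\subseteq T$, then use ultrahomogeneity of $\mathbf{K}$ to extend $x$ to some $g\in G$ with $g|_n=x$; for each $f\in F$ we get $g\circ f = x\circ f\in T$, verifying the neighborhood condition.

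The argument is mostly bookkeeping; the only non-trivial ingredient is the back-and-forth between $g\in G$ acting on the whole structure and $x\in H_n$ viewed as an embedding, which is exactly what the identification $H_n\cong \emb{\mathbf{A}_n,\mathbf{K}}$ and ultrahomogeneity are designed to furnish. There is no real obstacle; the potential subtlety is making sure one chooses $n$ large enough that all images $\ran{f}$ for $f\in F$ sit inside $\mathbf{A}_n$, which is always possible since $F$ is finite and $\mathbf{K}=\bigcup_n\mathbf{A}_n$.
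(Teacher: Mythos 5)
Your proof is correct and follows essentially the same route as the paper's: in both cases, the bridge between the dynamical definition and the combinatorial one is the correspondence between $g\in G$ and its restriction $g|_n\in H_n$, together with the fact that any $x\in H_n$ (being a coset of $G_n$) has a representative $g\in G$ with $g|_n=x$. The only cosmetic difference is that the paper works with a convergent sequence $\chi_T\cdot g_n\to\chi_{H_m}$ and passes to a subsequence, while you unpack the closure condition directly via basic product-topology neighborhoods; these are interchangeable since $2^{H_m}$ is metrizable.
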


\proof
Suppose $T\subseteq H_m$ is thick, and let $g_n\in G$ be a sequence of group elements with $\chi_T\cdot g_n\rightarrow \chi_{H_m}$. By passing to a subsequence, we may assume that for each $n\geq m$, we have $\chi_T\cdot g_n(f) = 1$ for every $f\in \emb{\mathbf{A}_m, \mathbf{A}_n}$. It follows that $g_n|_n\circ \emb{\mathbf{A}_m, \mathbf{A}_n}\subseteq T$.
\vspace{3 mm}

Conversely, suppose $T\subseteq H_m$ is a set so that for every $n\geq m$, there is $x_n\in H_n$ with $x_n\circ \emb{\mathbf{A}_m, \mathbf{A}_n}\subseteq T$. Find $g_n\in G$ with $g_n|_n = x_n$. Then $\chi_\cdot g_n\rightarrow \chi_{H_m}$, and $T$ is thick.
\qedhere
\vspace{3 mm}

Call a sequence $\vec{X} = \{x_n: n< \omega\}$ with each $x_n\in H_n$ an \emph{exhausting sequence}. Writing $\emb{\mathbf{A}_m, \vec{X}} := \bigcup_{n\geq m} (x_n\circ \emb{\mathbf{A}_m, \mathbf{A}_n})$, we can say that $T\subseteq H_m$ is thick iff there is an exhausting sequence with $\emb{\mathbf{A}_m, \vec{X}}\subseteq T$.

We now want something akin to the third set of definitions from the introduction. Before we can attempt a generalization, we need to understand how the thick families on different levels interact. The following proposition is implicit in section 4 of [Z] (see Lemma 4.2).

\begin{prop}
\label{ThickMap}
For any $f\in \emb{\mathbf{A}_m, \mathbf{A}_n}$, we have that $\hat{f}(\mathcal{T}_n) = \mathcal{T}_m$. 
\end{prop}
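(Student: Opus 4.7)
The plan is to exploit the shift characterization of thickness from Definition~\ref{MainDef} via the pullback map induced by $\hat{f}$. Define $\hat{f}^{*}: 2^{H_m} \to 2^{H_n}$ by $\hat{f}^{*}(\chi) = \chi \circ \hat{f}$. This map is continuous in the product topology and equivariant for the right shift actions, the latter because $\hat{f}$ itself intertwines the left $G$-actions: $\hat{f}(g\cdot y) = g\circ y\circ f = g\cdot \hat{f}(y)$. A direct check shows $\hat{f}^{*}(\chi_B) = \chi_{\hat{f}^{-1}(B)}$ for every $B\subseteq H_m$, and $\hat{f}^{*}(\chi_{H_m}) = \chi_{H_n}$ because $\hat{f}$ takes values in $H_m$.

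For the inclusion $\mathcal{T}_m\subseteq \hat{f}(\mathcal{T}_n)$, suppose $B\in \mathcal{T}_m$, i.e.\ $\chi_{H_m}\in \overline{\chi_B\cdot G}$. Applying the continuous $G$-equivariant map $\hat{f}^{*}$ gives
\[
\chi_{H_n} = \hat{f}^{*}(\chi_{H_m}) \in \overline{\hat{f}^{*}(\chi_B)\cdot G} = \overline{\chi_{\hat{f}^{-1}(B)}\cdot G},
\]
so $\hat{f}^{-1}(B)$ is thick and therefore $B\in \hat{f}(\mathcal{T}_n)$.

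For the reverse inclusion $\hat{f}(\mathcal{T}_n)\subseteq \mathcal{T}_m$, suppose $\hat{f}^{-1}(B)\in \mathcal{T}_n$, and pick a net $(g_i)$ in $G$ with $\hat{f}^{*}(\chi_B)\cdot g_i \to \chi_{H_n}$. By compactness of $2^{H_m}$, pass to a subnet along which $\chi_B\cdot g_i \to \chi$ for some $\chi\in\overline{\chi_B\cdot G}$. Continuity and equivariance of $\hat{f}^{*}$ force $\hat{f}^{*}(\chi)=\chi_{H_n}$, which unpacks to $\chi(y\circ f)=1$ for every $y\in H_n$. Here the decisive input is Proposition~\ref{Amalgamation}(1): since $\hat{f}:H_n\to H_m$ is surjective, every $j\in H_m$ has the form $y\circ f$, forcing $\chi\equiv 1$, i.e.\ $\chi=\chi_{H_m}$. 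Hence $\chi_{H_m}\in\overline{\chi_B\cdot G}$ and $B$ is thick.

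The main obstacle is the reverse direction: a net making the pullback converge to the all-ones function on $H_n$ need not manifestly make $\chi_B\cdot g_i$ converge to the all-ones function on $H_m$, because a priori the pullback could collapse information. Surjectivity of the dual map (equivalently, the extension property of the \fr structure $\mathbf{K}$) is exactly what prevents this collapse and completes the argument. Everything else is a routine application of continuity and equivariance.
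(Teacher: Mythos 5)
Your proof is correct, and it takes a genuinely different route from the paper. The paper first translates Definition~\ref{MainDef} into the combinatorial characterization of Proposition~\ref{Combinatorial} and then argues directly with exhausting sequences: for $\mathcal{T}_m\subseteq\hat f(\mathcal{T}_n)$ it composes an exhausting sequence on the right with $f$, and for the reverse inclusion it builds a new exhausting sequence by choosing levels $\ell_N$ large enough to realize every $h\in\emb{\mathbf{A}_m,\mathbf{A}_N}$ in the form $x_h\circ f$ (this is where the extension property enters). Your argument instead stays with the shift formulation and packages everything into the pullback $\hat f^{*}\colon 2^{H_m}\to 2^{H_n}$: continuity, equivariance, and compactness give the easy inclusion, and for the reverse inclusion the only nontrivial input is the surjectivity of $\hat f$ from Proposition~\ref{Amalgamation}(1), which is the same appeal to the extension property in a cleaner disguise. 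What your approach buys is a shorter, more conceptual proof that makes the functorial role of $\hat f$ explicit and avoids the index bookkeeping of the paper's argument; what the paper's approach buys is an explicit witnessing exhausting sequence, which is arguably more transparent if one wants concrete control over which thick sets appear, and it exercises Proposition~\ref{Combinatorial}, which the paper uses again later.
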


\begin{proof}
We need to show that $T\subseteq H_m$ is thick iff $f(T)\subseteq H_n$ is thick. Assume first that $T$ is thick and $\vec{X}$ is an exhausting sequence with $\emb{\mathbf{A}_m, \vec{X}}\subseteq T$. Then $\emb{\mathbf{A}_n, \vec{X}}\subseteq f(T)$ (since $\emb{\mathbf{A}_n, \vec{X}}\circ f\subseteq \emb{\mathbf{A}_m, \vec{X}}$). 

For the other direction, suppose $f(T)$ is thick and $\vec{X}$ is an exhausting sequence with $\emb{\mathbf{A}_n, \vec{X}}\subseteq f(T)$. For each $N\geq m$, find $\ell_N \geq N$ large enough so that for each $h\in \emb{\mathbf{A}_m, \mathbf{A}_N}$, there is $x_h\in \emb{\mathbf{A}_n, \mathbf{A}_{\ell_N}}$ with $x_h\circ f = h$. Let $\vec{Y}$ be the exhausing sequence given by $y_N = x_{\ell_N}\circ i^{\ell_N}_N$. Then $\emb{\mathbf{A}_m, \vec{Y}}\subseteq T$.
\end{proof}

A consequence of Proposition \ref{ThickMap} is that for $n\geq m$, $f\in \emb{\mathbf{A}_m, \mathbf{A}_n}$, and $F$ a $\mathcal{T}_n$-filter on $H_n$, we have that $\hatf(F)$ is a $\mathcal{T}_m$-filter on $H_m$. This will be especially important when $f = i_m^n$.

If $T\subseteq H_m$, set $\overline{T} = \{p\in \beta H_m: T\in p\}$, and set $\tilde{T} = \{\alpha\in \varprojlim \beta H_n: T\in \alpha(m)\}$. If $Y_m\subseteq \beta H_m$ is closed, then the \emph{filter of clopen neighborhoods} of $Y_m$ is the filter $F_m$ on $H_m$ so that $Y_m = \bigcap_{T\in F_m} \overline{T}$. Conversely, if $F_m$ is a filter on $H_m$, then the \emph{closed set} for $F_m$ is the set $\bigcap_{T\in F_m} \overline{T}$. The following two theorems are the key to generalizing Fact \ref{SemigroupDef}

\begin{theorem}
\label{ThickUlts}
Let $Y = \varprojlim Y_n\subseteq \varprojlim \beta H_n = S(G)$ be compact, with $Y_m\subseteq \beta H_m$ for $m\in \mathbb{N}$. Let $F_m$ be the filter of clopen neighborhoods of $Y_m$. Then $Y$ is a minimal right ideal iff each $F_m$ is a member of $\beta(\mathcal{T}_m)$.
\end{theorem}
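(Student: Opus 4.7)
The plan is to prove both directions via the auxiliary characterization that $T \subseteq H_m$ is thick iff there exists a minimal right ideal $Z \subseteq S(G)$ with $Z_m \subseteq \overline{T}$, equivalently iff some $\alpha \in S(G)$ satisfies $T \in \alpha \cdot f$ for every $f \in H_m$. I would establish this up front: from any witness $\alpha$ for thickness, the orbit closure $\overline{\alpha G}$ is a closed right ideal contained in $\tilde{T}$, hence contains a minimal right ideal. I would also record the reformulation that $Y$ is a right ideal iff $\tilde{f}(Y_n) \subseteq Y_m$ for every $f \in \emb{\mathbf{A}_m, \mathbf{A}_n}$, equivalently iff $f(T) \in F_n$ whenever $T \in F_m$; this follows from the identity $\alpha \cdot f = \tilde{f}(\alpha(n))$ applied inside $Y$.

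For the forward direction, assume $Y$ is a minimal right ideal. The containment $F_m \subseteq \mathcal{T}_m$ is immediate: given $T \in F_m$, any $\alpha \in Y$ and any $f = g|_m \in H_m$ yield $\alpha \cdot f \in Y_m \subseteq \overline{T}$, so $T \in \alpha \cdot f$ for every $f$, and $T$ is thick by the auxiliary fact. For maximality of $F_m$, take $T' \in \mathcal{T}_m \setminus F_m$. Then $Y \setminus \tilde{T'}$ is nonempty and relatively open in $Y$, and minimality plus compactness yield a finite $E \subseteq G$ with $Y \subseteq \bigcup_{g \in E}(Y \setminus \tilde{T'}) \cdot g^{-1}$. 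For $n$ large enough that each $g|_m \in \emb{\mathbf{A}_m, \mathbf{A}_n}$ for $g \in E$, this says exactly $H_n \setminus U \in F_n$, where $U := \bigcap_{g \in E} g|_m(T')$. Setting $T := \hati{m}{n}(H_n \setminus U) \in F_m$, I would verify $T \cap T' \notin \mathcal{T}_m$ by a combinatorial argument: any exhausting sequence witnessing thickness of $T \cap T'$ would lift via Proposition 2.4 to a witness at level $n$ whose images under the maps in $E$ are forced into $T'$ at every relevant coset, contradicting $Y \cap \tilde{U} = \emptyset$.

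For the converse, assume each $F_m$ is a $\mathcal{T}_m$-ultrafilter. To show $Y$ is a right ideal, the reformulation reduces the task to $f(T) \in F_n$ whenever $T \in F_m$ and $f \in \emb{\mathbf{A}_m, \mathbf{A}_n}$. Maximality of $F_n$ together with Proposition 3.3 further reduces this to showing $f(T) \cap S \in \mathcal{T}_n$ for every $S \in F_n$. The strategy is to push the intersection through the surjective dual map $\hat{f}$, relate the result to $T \cap \hat{f}(S) \in \mathcal{T}_m$ (thick by Proposition 3.3), and invoke the maximality of $F_m$ together with properties of $\hat{f}(S)$ to transport thickness back up to level $n$. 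Once $Y$ is known to be a right ideal, minimality is clean: any proper closed right subideal $Y' \subsetneq Y$ contains a minimal right ideal $M \subsetneq Y$, and the forward direction applied to $M$ produces a $\mathcal{T}_m$-ultrafilter $F_{M,m}$ that properly extends $F_m$ at any $m$ with $M_m \subsetneq Y_m$, contradicting maximality.

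The main obstacle is the thickness-of-intersection step $f(T) \cap S \in \mathcal{T}_n$ in the converse direction, which demands that the family $\{F_m\}$ cohere across levels in the precise way mirroring the right-ideal action of $G$. Its counterpart in the forward direction (ruling out thickness of $T \cap T'$) admits a parallel combinatorial treatment, and both ultimately rest on aligning combinatorial witnesses of thickness across levels using the amalgamation content of Proposition 2.4.
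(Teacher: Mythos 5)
Your overall scheme is a genuinely different decomposition from the paper's, but both directions have gaps in the critical combinatorial steps, and I do not believe the argument as sketched goes through.

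In the forward direction, your covering argument is sound: minimality and compactness give a finite $E\subseteq G$ with $Y\subseteq\bigcup_{g\in E}(Y\setminus\tilde{T'})g^{-1}$, and this correctly yields $H_n\setminus U\in F_n$ for $U=\bigcap_{g\in E}g|_m(T')$. But the next step, ``Setting $T:=\hati{m}{n}(H_n\setminus U)\in F_m$,'' is not well-defined: $\hati{m}{n}$ pushes forward filters, not sets, and the set-image of $H_n\setminus U$ under the dual map can easily be all of $H_m$, which tells you nothing. More seriously, the claimed combinatorial verification that $T\cap T'\notin\mathcal{T}_m$ is left as ``I would verify,'' and I do not see how lifting an exhausting sequence for $T\cap T'$ to level $n$ produces a point of $Y$ inside $\tilde{U}$ (it could only witness that $U$ is thick, which is compatible with $Y\cap\tilde U=\emptyset$). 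The paper instead handles this direction with a semigroup argument: extend $F_m$ to $G_m\in\beta(\mathcal{T}_m)$, build a coherent tower $G_n$ to obtain a competing minimal right ideal $Z$, and use the fact that left-multiplication by any $\alpha\in Z$ is a homeomorphism $Y_m\to Z_m$ to derive a contradiction with maximality of $G_m$. That step has no analogue in your sketch.

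In the converse direction, your reduction of the right-ideal property to $f(T)\cap S\in\mathcal{T}_n$ for all $T\in F_m$, $S\in F_n$ is correct, but the ``strategy'' for proving it is circular as described. When $f=i_m^n$ you have $i_m^n(T)\cap S\in F_n$ for free, but for general $f$ you would naturally invoke Proposition \ref{Amalgamation}(2) to find $h$ with $h\circ f=i_m^N$, and then you need $h(S)\in F_N$ for $S\in F_n$ --- which is exactly the right-ideal coherence $\tilde h(Y_N)\subseteq Y_n$ you are trying to establish. Pushing through $\hat f$ runs into the same issue because $\hat f(F_n)$ need not equal $F_m$ for general $f$. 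The paper avoids this entirely by first proving Proposition \ref{ContainRIdeal} as a standalone combinatorial lemma (the $Y_{W,S}$ argument), from which containment of a right ideal follows; minimality then comes from the observation that any proper closed $Z\subsetneq Y$ has $Z_m\subsetneq Y_m$ for large $m$, so the corresponding filter properly extends the ultrafilter $F_m$ and hence is not a $\mathcal{T}_m$-filter. Your final minimality-from-rightideal step via the forward direction is a nice alternative, but it only works if the forward direction is independently secured, and the paper's logical order is the reverse (it proves $\Leftarrow$ first and uses it inside $\Rightarrow$).

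In short: the preliminary characterizations and reductions you state are correct, but both directions delegate the actual work to combinatorial claims that are either not well-posed ($T:=\hati{m}{n}(H_n\setminus U)$), unjustified (the non-thickness of $T\cap T'$), or appear to presuppose the conclusion ($f(T)\cap S\in\mathcal{T}_n$). To fix this along your lines you would essentially need to re-derive Proposition \ref{ContainRIdeal} and the left-translation homeomorphism argument, at which point you would have reproduced the paper's proof.
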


\begin{theorem}
\label{RegularMaps}
If $f\in \emb{\mathbf{A}_m, \mathbf{A}_n}$, then $\hatf: (H_n, \mathcal{T}_n)\rightarrow H_m$ is regular,
\end{theorem}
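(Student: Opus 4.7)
The plan is to leverage Theorem~\ref{ThickUlts}, stated immediately above, which identifies $\mathcal{T}_k$-ultrafilters with the filters of clopen neighborhoods of $Y_k$ for minimal right ideals $Y = \varprojlim Y_k \subseteq S(G)$. Given $p \in \beta(\mathcal{T}_n)$, I would fix such a minimal right ideal $Y$ with $p = F_n = \{T \subseteq H_n : Y_n \subseteq \overline{T}\}$, and then show by direct computation that $\hatf(p)$ coincides with $F_m$, the filter of clopen neighborhoods of $Y_m$. Once this is done, another application of Theorem~\ref{ThickUlts} places $\hatf(p)$ in $\beta(\mathcal{T}_m)$.

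The computation boils down to the geometric claim that $\tilde{f}(Y_n) = Y_m$. Granting this, for any $B \subseteq H_m$ one has
\begin{align*}
B \in \hatf(p) \;&\Longleftrightarrow\; f(B) \in p \;\Longleftrightarrow\; Y_n \subseteq \overline{f(B)} = \tilde{f}^{-1}(\overline{B}) \\
&\Longleftrightarrow\; \tilde{f}(Y_n) \subseteq \overline{B} \;\Longleftrightarrow\; Y_m \subseteq \overline{B} \;\Longleftrightarrow\; B \in F_m,
\end{align*}
using the identity $\tilde{f}^{-1}(\overline{B}) = \overline{\hatf^{-1}(B)} = \overline{f(B)}$ for the continuous extension $\tilde{f} : \beta H_n \to \beta H_m$.

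The main obstacle, then, is step 2: establishing $\tilde{f}(Y_n) = Y_m$. The inclusion $\tilde{f}(Y_n) \subseteq Y_m$ follows from the right-ideal property together with the explicit description of the semigroup action given earlier in Section~2.2: for any $\alpha \in \varprojlim \beta H_k$ and any $g \in G$ with $\ran(g|_m) \subseteq \mathbf{A}_n$, one has $(\alpha g)(m) = \tilde{g|_m}(\alpha(n))$, and since $Y$ is a right ideal, $\alpha g \in Y$, whence the $m$-th coordinate lands in $Y_m$. For the reverse inclusion $Y_m \subseteq \tilde{f}(Y_n)$, I would use ultrahomogeneity (as in Proposition~\ref{Amalgamation}) to realize the given $f \in \emb{\mathbf{A}_m, \mathbf{A}_n}$ as $f = g|_m$ for some $g \in G$. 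Right multiplication by $g$ is a bijection of $Y$ onto itself (its inverse is right multiplication by $g^{-1}$, and $Y$ being a right ideal implies $Yg, Yg^{-1} \subseteq Y$), so the map $\alpha \mapsto (\alpha g)(m) = \tilde{f}(\alpha(n))$ has image exactly $Y_m$.

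Thus step 2 rests on combining the right-ideal property of $Y$ with the key observation that every element of $\emb{\mathbf{A}_m, \mathbf{A}_n}$ extends to a group element, letting one use the bijectivity of the right group action on $Y$. Once $\tilde{f}(Y_n) = Y_m$ is in hand, the rest is essentially bookkeeping, and the regularity of $\hatf$ follows.
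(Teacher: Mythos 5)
Your proposal takes a genuinely different route from the paper, and the core argument is correct, but step~1 is glossed over in a way that needs to be filled in.

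The gap is in your step~1: ``fix such a minimal right ideal $Y$ with $p = F_n$.'' You treat this as a direct consequence of Theorem~\ref{ThickUlts}, but that theorem is a biconditional about an \emph{already given} inverse system $Y = \varprojlim Y_k$ --- it does not by itself assert that every $\mathcal{T}_n$-ultrafilter arises as the $n$-th level of some minimal right ideal. (Indeed, that existence statement is the corollary the paper states \emph{after} Theorem~\ref{RegularMaps}.) To produce such a $Y$, you must first extend $p$ upward to a compatible chain $F_N \in \beta(\mathcal{T}_N)$ for $N \geq n$ with $\hati{N}{N+1}(F_{N+1}) = F_N$, using Proposition~\ref{ThickMap} together with the general containment $\beta(\hatf(\mathcal{S})) \subseteq \hatf(\beta(\mathcal{S}))$ noted in Section~3; define $F_k$ for $k < n$ by pushforward; and then apply the $(\Leftarrow)$ direction of Theorem~\ref{ThickUlts} (infinitely many of the $F_k$ are ultrafilters). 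This is exactly the construction in the paper's proof, so your step~1 is not circular, but it must be spelled out.

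Once that is done, the rest of your argument is correct and constitutes a genuinely different way of handling a general $f$. The paper reduces to the inclusion case $f = i_m^n$ via Proposition~\ref{Amalgamation}(2) and the abstract Lemma~\ref{RegMap}, and the inclusion case then falls out immediately from the $(\Rightarrow)$ direction of Theorem~\ref{ThickUlts}. You instead bypass Lemma~\ref{RegMap} entirely: you extend $f$ to $g\in G$ by ultrahomogeneity, observe that right multiplication by $g$ is a bijection of the right ideal $Y$ onto itself, and combine this with the identity $(\alpha g)(m) = \tilde{f}(\alpha(n))$ to get $\tilde{f}(Y_n) = Y_m$. This is a nice trade: it is more computational, and it reveals the additional fact that $\hatf(F_n) = F_m$ is actually \emph{independent} of the choice of $f\in\emb{\mathbf{A}_m,\mathbf{A}_n}$, something the paper's route does not make visible. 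The computation chaining $B\in\hatf(p)$ through to $B\in F_m$ is correct (your use of $\overline{f(B)} = \tilde{f}^{-1}(\overline{B})$ is fine), as is the argument that $Yg = Y$ and hence $\alpha\mapsto(\alpha g)(m)$ surjects onto $Y_m$. So: fill in step~1 with the upward extension, and the proof stands.
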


As a consequence of both Theorems \ref{ThickUlts} and \ref{RegularMaps}, notice that if $F_m\in \beta(\mathcal{T}_m)$, then there is a minimal right ideal $Y = \varprojlim Y_m\subseteq S(G)$ with $F_m$ the filter of clopen neighborhoods of $Y_m$. In particular, since every thick $T\subseteq H_m$ belongs to some $F\in \beta(\mathcal{T}_m)$, we have the following corollary, a generalization of Theorem 2.9(d) from [BHM].

\begin{cor}
$T\subseteq H_m$ is thick iff $\tilde{T}\subseteq S(G)$ contains a minimal right ideal.
\end{cor}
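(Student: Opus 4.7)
The plan is to deduce this as bookkeeping from the two preceding theorems plus the parenthetical consequence drawn just before the statement. Both directions come down to matching the condition $Y \subseteq \tilde{T}$ with the condition $Y_m \subseteq \overline{T}$, and then invoking Theorem \ref{ThickUlts} to pass between filters of clopen neighborhoods and membership in $\beta(\mathcal{T}_m)$.

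For the forward direction, assume $T \subseteq H_m$ is thick. Since $\mathcal{T}_m$ is upward closed, the principal filter $\{S \subseteq H_m : S \supseteq T\}$ is a $\mathcal{T}_m$-filter, and Zorn's lemma produces some $F_m \in \beta(\mathcal{T}_m)$ with $T \in F_m$. Applying the consequence of Theorems \ref{ThickUlts} and \ref{RegularMaps} stated just before the corollary, there is a minimal right ideal $Y = \varprojlim Y_n \subseteq S(G)$ such that $F_m$ is the filter of clopen neighborhoods of $Y_m$. Since $T \in F_m$, we get $Y_m \subseteq \overline{T}$, so every $\alpha \in Y$ satisfies $\alpha(m) \in \overline{T}$, i.e.\ $T \in \alpha(m)$; hence $Y \subseteq \tilde{T}$.

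For the backward direction, let $Y = \varprojlim Y_n \subseteq \tilde{T}$ be a minimal right ideal. Under the convention adopted at the end of Section 2 that $\tildei{m}{n}(Y_n) = Y_m$ whenever we write $Y = \varprojlim Y_n$, the projection of $Y$ onto the $m$-th coordinate is exactly $Y_m$; surjectivity at finite stages is the content of Proposition \ref{Amalgamation}(1). Therefore $Y \subseteq \tilde{T}$ translates to $Y_m \subseteq \overline{T}$. By Theorem \ref{ThickUlts}, the filter $F_m$ of clopen neighborhoods of $Y_m$ is a $\mathcal{T}_m$-ultrafilter; in particular $F_m \subseteq \mathcal{T}_m$. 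Since $\overline{T}$ is a clopen neighborhood of $Y_m$, we have $T \in F_m$, and hence $T$ is thick.

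The main obstacle is essentially nonexistent once Theorems \ref{ThickUlts} and \ref{RegularMaps} are granted; the only conceptual step is recognizing that a single thick set generates a $\mathcal{T}_m$-filter (by upward closure of thickness) which Zorn extends to a $\mathcal{T}_m$-ultrafilter, whereupon the preceding theorems realize that ultrafilter as the trace of a minimal right ideal. The most error-prone point to watch is the identification $\pi_m(Y) = Y_m$, which relies on the standing surjectivity hypothesis on the connecting maps $\tildei{m}{n}$.
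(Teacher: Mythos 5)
Your proof is correct and follows the same approach the paper indicates in the remark immediately preceding the corollary: extend the principal $\mathcal{T}_m$-filter generated by $T$ to a $\mathcal{T}_m$-ultrafilter, then invoke Theorems \ref{ThickUlts} and \ref{RegularMaps} to realize it as the trace of a minimal right ideal, with the converse coming from reading $T$ off the filter of clopen neighborhoods. The one small wobble is the appeal to Proposition \ref{Amalgamation}(1) for surjectivity of the projection $Y \to Y_m$: what you actually need is that the restricted connecting maps $\tildei{m}{n}\colon Y_n \to Y_m$ are surjective (which is exactly the standing convention $\tildei{m}{n}(Y_n)=Y_m$, combined with compactness and nonemptiness of each $Y_n$), not surjectivity of the ambient maps $\beta H_n \to \beta H_m$; this does not affect the correctness of the argument.
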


The first step is to characterize right ideals of $\varprojlim \beta H_n$. The following is a generalization of Theorem 8.7 from [Z]. 

\begin{prop}
\label{ContainRIdeal}
Let $Y = \varprojlim Y_n\subseteq \varprojlim \beta H_n$ be compact, with $Y_m\subseteq \beta H_m$ for $m\in \mathbb{N}$. Let $F_m$ be the filter of clopen neighborhoods of $Y_m$. Then $Y$ contains a right ideal iff each $F_m$ is a $\mathcal{T}_m$-filter.
\end{prop}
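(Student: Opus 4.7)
The plan is to handle the two directions separately. For the forward direction, suppose $Z \subseteq Y$ is a right ideal and fix $\alpha \in Z$. To show $T \in F_m$ is thick, I will verify Proposition~\ref{Combinatorial}: given $n \geq m$, each $f \in \emb{\mathbf{A}_m, \mathbf{A}_n}$ extends by ultrahomogeneity to some $g_f \in G$, and $\alpha g_f \in Z \subseteq Y$ gives $T \in \alpha g_f(m)$, which unpacks to $f(T) \in \alpha(n)$. Since $\emb{\mathbf{A}_m, \mathbf{A}_n}$ is finite, $\bigcap_f f(T) \in \alpha(n)$, so it is nonempty; any $x$ in it satisfies $x \circ \emb{\mathbf{A}_m, \mathbf{A}_n} \subseteq T$.

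For the reverse direction, the idea is to find $\alpha \in \varprojlim \beta H_n$ with $\alpha g \in Y$ for all $g \in G$; then $\overline{\alpha G} = \alpha \cdot \varprojlim \beta H_n$ will be a closed right ideal inside $Y$ (using closedness of $Y$ and continuity of left multiplication by $\alpha$). Since the right $G$-action is continuous, each set $\{\alpha : \alpha g \in Y\}$ is closed, so by compactness it suffices to produce such an $\alpha$ for each finite $E \subseteq G$ containing $1$ in place of $G$.

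With $E$ fixed, for each $n$ I will define $\Phi_n$ to be the filter on $H_n$ generated by $\{(g|_m)(T) : g \in E,\ T \in F_m,\ \ran{g|_m} \subseteq \mathbf{A}_n\}$; note that $F_n \subseteq \Phi_n$ by taking $g = 1, m = n$. A direct check shows that for $m \leq n$, the $\hati{m}{n}$-preimage of any $\Phi_m$-generator is itself a $\Phi_n$-generator, so that if $\alpha(n)$ extends $\Phi_n$, then $\alpha(m) = \tildei{m}{n}(\alpha(n))$ extends $\Phi_m$. Hence the closed sets $A_n := \{\alpha \in \varprojlim \beta H_k : \Phi_n \subseteq \alpha(n)\}$ form a decreasing nest in the compact space $\varprojlim \beta H_k$, and using surjectivity of the bonding maps (from Proposition~\ref{Amalgamation}(1)) each $A_n$ is nonempty as soon as $\Phi_n$ has FIP. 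Any $\alpha \in \bigcap_n A_n$ will then be the sought witness.

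The main obstacle is verifying FIP for $\Phi_n$. Given finitely many generators $(g_i|_{m_i})(T_i)$, set $m_* = \max_i m_i$ and $T_* := \bigcap_i i_{m_i}^{m_*}(T_i)$. Coherence of the $F_m$'s (from $\tildei{m_i}{m_*}(Y_{m_*}) = Y_{m_i}$) forces each $i_{m_i}^{m_*}(T_i) \in F_{m_*}$, so $T_* \in F_{m_*}$, and $T_*$ is thick by hypothesis. Using the extension property of $\mathbf{K}$, I choose $N \geq n$ large enough that every $g_i|_{m_i}$ extends to some $e_i \in \emb{\mathbf{A}_{m_*}, \mathbf{A}_N}$ with $e_i \circ i_{m_i}^{m_*} = i_n^N \circ g_i|_{m_i}$; thickness of $T_*$ then yields $z \in H_N$ with $z \circ \emb{\mathbf{A}_{m_*}, \mathbf{A}_N} \subseteq T_*$. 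Setting $y := z \circ i_n^N \in H_n$ gives $y \circ g_i|_{m_i} = (z \circ e_i) \circ i_{m_i}^{m_*} \in T_i$ for every $i$, since $z \circ e_i \in T_* \subseteq i_{m_i}^{m_*}(T_i)$, and hence $y \in \bigcap_i (g_i|_{m_i})(T_i)$.
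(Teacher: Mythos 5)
Your proof is correct and follows essentially the same strategy as the paper's: the forward direction is the contrapositive of the paper's argument (both showing $\bigcap_{f\in\emb{\mathbf{A}_m,\mathbf{A}_n}} f(T)\in\alpha(n)$, hence nonempty, hence $T$ thick), and the reverse direction is the same compactness argument producing $\alpha$ with $\alpha G\subseteq Y$ so that $\overline{\alpha\cdot G}\subseteq Y$ is a closed right ideal. Your bookkeeping via per-level filters $\Phi_n$ and the direct thickness witness $y = z\circ i_n^N$ is a clean repackaging of the paper's nonemptiness argument for the sets $Y_{W,S}$, but the underlying mechanism is identical.
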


\begin{proof}  
First assume that each $Y_m$ is thick. For $W\subseteq G$ finite, $m\in\mathbb{N}$, and $S\in F_m$, let $Y_{W, S}$ consist of those $\alpha\in Y$ such that $S\in \alpha g(m)$ for each $g\in W$. Notice that $Y_{W,S}\subseteq Y$ is closed, hence compact.
\vspace{2 mm} 

\begin{claim} 
First, let us show that $Y_{W, S}$ is nonempty. Fix $n$ large enough so that $g(\mathbf{A}_m)\subseteq \mathbf{A}_n$ for each $g\in W\cup \{1_G\}$. For $g\in G$, set $T_g = \{f\in H_n: f\circ g|_m\in S\}$. We will show that the set $X := T_{1_G}\setminus \left(\bigcap_{g\in W} T_g\right)$ is not thick. If it were, pick $N$ large enough so that $g(\mathbf{A}_n)\subseteq \mathbf{A}_N$ for each $g\in W\cup \{1_G\}$ and find $h\in H_N$ so that $h\circ \emb{\mathbf{A}_n,\mathbf{A}_N}\subseteq X$. But now for each $g\in W$, set $x_g = h\circ g|_n\circ i_m^n = h\circ i^N_n\circ g|_m$. Since $g|_n$ and $i^N_n$ are both in $\emb{\mathbf{A}_n,\mathbf{A}_N}$, we have $h\circ g|_n$ and $h\circ i^N_n$ in $X$. Since $h\circ g|_n\in X\subseteq T_{1_G}$, we have $x_g\in S$. But this implies that $h\circ i^N_n\in\bigcap_{g\in W} T_g$, a contradiction.

Since $F_n\in \mathcal{T}_n$ and since $T_{1_G} = i_m^n(S)\in Y_n$, this means that $(\bigcap_{g\in W\cup \{1_G\}} T_g) \in p$ for some $p\in Y_n$. Now any $\alpha\in Y$ with $\alpha (n) = p$ is a member of $Y_{W,S}$. This proves the claim.
\end{claim}
\vspace{2 mm}

Now observe that if $W_1,W_2$ are finite subsets of $G$, $S_1\in F_m$, and $S_2\in F_n$ ($m\leq n$), then letting $S_3 = i_m^n(S_1)\cap S_2\in F_n$, we have $Y_{W_1,S_1}\cap Y_{W_2, S_2} \supseteq Y_{W_1\cup W_2, S_3}$. In particular, since each $Y_{W,S}$ is compact, there is $\alpha\in Y$ a member of all of them. Hence $\overline{\alpha\cdot G}\subseteq Y$ is a subflow of $\varprojlim \beta H_n$. 
\vspace{3 mm}

For the other direction, suppose there were $S\in F_m$ with $S\not\in \mathcal{T}_m$. Pick $\alpha\in Y$; we need to show that for some $g\in G$, $S\not\in \alpha g(m)$. To the contrary, suppose that $S\in \alpha g(m)$ for every $g\in G$, or equivalently, that $S\in \alpha\cdot f$ for every $f\in H_m$. But then $\{x\in H_n: x\circ f\in S \textrm{ for every } f\in \emb{\mathbf{A}_m, \mathbf{A}_n}\}\in \alpha(n)$, so in particular is non-empty. But this implies that $S$ is thick, a contradiction.
\end{proof}

Before proving Theorem \ref{ThickUlts}, we need a quick remark about topological dynamics. If $X$ is a $G$-flow, then each point $x\in X$ gives rise to the ambit $(\overline{x\cdot G}, x)$. As such, there is a map of ambits $\phi: (S(G), 1)\rightarrow (\overline{x\cdot G}, x)$, and we write $\phi(\alpha) := x\cdot \alpha$. Since $\phi$ is surjective, we have $x\cdot S(G) = \overline{x\cdot G}$. If $\alpha, \gamma\in S(G)$, we also have $x\cdot (\alpha\gamma) = (x\cdot \alpha)\cdot \gamma$. Now suppose that $X = 2^{H_m}$, and $T\subseteq H_m$. Then if $\alpha\in S(G)$, we have $\chi_T\cdot \alpha = \chi_{\alpha^{-1}(T)}$. 

\begin{proof}[Proof of Theorem \ref{ThickUlts}]\mbox{}\\*
$(\Leftarrow)$ We prove a slightly stronger statement. Suppose $Y = \varprojlim Y_n$ is a closed subset of $\varprojlim \beta H_n$ with $F_m$ the filter of clopen neighborhoods of $Y_m$. Say that for infinitely many $m\in \mathbb{N}$, we have $F_m\in \beta(\mathcal{T}_m)$. By Proposition \ref{ThickMap}, we see that for every $m\in \mathbb{N}$, $F_m$ is a $\mathcal{T}_m$-filter, so by Proposition \ref{ContainRIdeal}, $Y$ contains some right ideal. If $Z = \varprojlim Z_n \subsetneq Y$ is any proper closed subset, then $Z_m\subsetneq Y_m$ for any large enough $m\in \mathbb{N}$. So fix a large $m\in \mathbb{N}$ with $F_m\in \beta(\mathcal{T}_m)$. The filter $G_m$ of clopen neighborhoods of $Z_m$ then properly contains $F_m$ and thus cannot be in $\mathcal{T}_m$. Hence $Z$ is not a right ideal, so $Y$ is a minimal right ideal.

$(\Rightarrow)$ If $\alpha\in \varprojlim \beta H_n$, then the left-multiplication map $p\rightarrow \alpha\cdot p$ on $\beta H_m$ is continuous. Hence if $X\subseteq \beta H_m$ is a closed subset defined by the filter $F$, $\alpha X\subseteq \beta H_m$ is also closed. It is defined by the filter
\begin{align*}
\alpha F := \{S\subseteq H_m: \alpha^{-1}(S)\in F\}.
\end{align*}
Now suppose that $F$ is a $\mathcal{T}_m$-filter. Then for any $\alpha\in \varprojlim \beta H_n$, $\alpha F$ is also a $\mathcal{T}_m$-filter. This is because $\alpha^{-1}(S)\in \mathcal{T}_m$ implies that for some $\alpha'\in \varprojlim \beta H_n$, we have $\chi_S\cdot \alpha\alpha' = \chi_{H_m}$. But this implies that $S\in \mathcal{T}_m$.

Let $Y = \varprojlim Y_m$ be a minimal right ideal, with $F_m$ the filter of clopen neighborhoods of $Y_m$. Suppose for sake of contradiction that for some $m\in \mathbb{N}$, $F_m\not\in \beta(\mathcal{T}_m)$. Say $G_m\supsetneq F_m$ with $G_m\in \beta(\mathcal{T}_m)$. Inductively define for each $n\geq m$ $G_n\in \beta(\mathcal{T}_n)$ so that $G_{n+1}$ is any $\mathcal{T}_{n+1}$-ultrafilter with $\hati{n}{n+1}(G_{n+1}) = G_n$. If $Z_m\subseteq \beta H_m$ is the closed set for $G_m$, then by the proof of the right to left implication, $\varprojlim Z_n$ is a minimal right ideal. Fix $\alpha\in Z$, and consider $\alpha G_m$. As the left-multiplication map $p\rightarrow \alpha\cdot p$ is a homeomorphism from $Y_m$ to $Z_m$, we have $\alpha(Z_m)\subsetneq Z_m$, so $\alpha G_m\supsetneq G_m$. However, $\alpha G_m$ is a $\mathcal{T}_m$-filter, and $G_m$ is maximal. Taken together, this is a contradiction.
\end{proof}
\vspace{3 mm}

We now turn to the proof of Theorem \ref{RegularMaps}. First an easy lemma.

\begin{lemma}
\label{RegMap}
If $f: (X,\mathcal{S})\rightarrow Z$ is a regular map, and $f = g\circ h$, with $h: X\rightarrow Y$ and $g: Y\rightarrow Z$, then $g: (Y, h(\mathcal{S}))\rightarrow Z$ is regular.
\end{lemma}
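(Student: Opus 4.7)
The plan is to unwind the definitions and use the fact, mentioned just before Proposition \ref{StrongReg}, that the push-forward map on ultrafilters is always surjective onto the ultrafilters of the push-forward family. Concretely, I would first verify the purely formal identity
\[
g(h(\mathcal{S})) = f(\mathcal{S}),
\]
which is immediate from the definition of push-forward: for $C\subseteq Z$, we have $C\in g(h(\mathcal{S}))$ iff $g^{-1}(C)\in h(\mathcal{S})$ iff $h^{-1}(g^{-1}(C))\in \mathcal{S}$ iff $f^{-1}(C)\in \mathcal{S}$.

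Next, let $q\in \beta(h(\mathcal{S}))$; I want to show $g(q)\in \beta(g(h(\mathcal{S})))$. Applying the general fact $\beta(h(\mathcal{S}))\subseteq h(\beta(\mathcal{S}))$ to the map $h: (X,\mathcal{S})\to Y$, I can choose $p\in \beta(\mathcal{S})$ with $h(p) = q$. Then by functoriality of push-forward,
\[
g(q) = g(h(p)) = f(p).
\]
Since $f$ is regular, $f(p)\in \beta(f(\mathcal{S}))$, and by the identity from the first step this is exactly $\beta(g(h(\mathcal{S})))$. Hence $g(q)\in \beta(g(h(\mathcal{S})))$, which is what we needed.

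There is no real obstacle here; the argument is a one-line diagram chase once the identity $g(h(\mathcal{S})) = f(\mathcal{S})$ is noted. The only conceptual content is the use of the free direction $\beta(h(\mathcal{S}))\subseteq h(\beta(\mathcal{S}))$, which lets us lift an arbitrary $h(\mathcal{S})$-ultrafilter back to an $\mathcal{S}$-ultrafilter so that the regularity hypothesis on $f$ can be applied.
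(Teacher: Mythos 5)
Your proof is correct and follows the paper's argument exactly: lift $q$ to some $p\in\beta(\mathcal{S})$ using the surjectivity of push-forward onto $\beta(h(\mathcal{S}))$, push $p$ forward under $f=g\circ h$, and invoke regularity of $f$ together with the identity $g(h(\mathcal{S}))=f(\mathcal{S})$. You merely spell out the functoriality steps more explicitly than the paper does.
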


\begin{proof}
Let $q\in \beta(h(\mathcal{S}))$, and set $r = g(q)$. Pick $p\in \beta(\mathcal{S})$ with $h(p) = q$. Then $f(p) = r$, so $r\in \beta(f(\mathcal{S})) = \beta (g(h(\mathcal{S})))$.
\end{proof}

\begin{proof}[Proof of Theorem \ref{RegularMaps}]\mbox{}\\*
 By Proposition \ref{Amalgamation} and Lemma \ref{RegMap}, it is enough to show that each $\hati{m}{n}$ is regular. Say $F_n\in \beta(\mathcal{T}_n)$. Inductively define $F_N\in \beta(\mathcal{T}_N)$ for each $N\geq n$ so that $\hati{N+1}{N}(F_{N+1}) = F_N$; letting $Y_n\subseteq \beta H_n$ be the closed set for $F_n$, we have seen that $\varprojlim Y_n$ is a minimal right ideal. It follows that $F_m = \hati{m}{n}(F_n)$ is in $\beta(\mathcal{T}_m)$.
\end{proof}
\vspace{3 mm}

We end this section with a brief ``sanity check.'' Recall from section 2 the map $\pi_m: G_d\rightarrow H_m$ with $\pi_m(g) = g|_m$.

\begin{theorem}
\label{Sanity}
$T\subseteq H_m$ is thick iff $\pi_m^{-1}(T)\subseteq G_d$ is thick.
\end{theorem}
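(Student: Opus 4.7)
The plan is to translate each side into its combinatorial form and then check them against each other directly. For the right-hand side, $G_d$ is a discrete group, so by Definition \ref{ComboDef}, $\pi_m^{-1}(T)$ is thick iff for every finite $E\subseteq G$ there is $g\in G$ with $gE\subseteq \pi_m^{-1}(T)$, which by unpacking $\pi_m$ means $g\cdot(h|_m)\in T$ for every $h\in E$. For the left-hand side, Proposition \ref{Combinatorial} says $T$ is thick iff for every $n\geq m$ there is $x\in H_n$ with $x\circ \emb{\mathbf{A}_m,\mathbf{A}_n}\subseteq T$.

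For the forward direction, I would start with a finite $E\subseteq G$ and choose $n\geq m$ large enough that $h(\mathbf{A}_m)\subseteq \mathbf{A}_n$ for every $h\in E$; then each restriction $h|_{\mathbf{A}_m}$ lies in $\emb{\mathbf{A}_m,\mathbf{A}_n}$. Thickness of $T$ gives $x\in H_n$ with $x\circ \emb{\mathbf{A}_m,\mathbf{A}_n}\subseteq T$; lifting to $g\in G$ with $g|_n=x$, the key identity $(gh)|_m = x\circ (h|_{\mathbf{A}_m})$ (both sides send $a\in\mathbf{A}_m$ to $g(h(a))$) shows $gh\in \pi_m^{-1}(T)$ for all $h\in E$.

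For the converse, start with $n\geq m$ and form the finite set $F=\emb{\mathbf{A}_m,\mathbf{A}_n}$. By ultrahomogeneity (applied to the isomorphism $f:\mathbf{A}_m\to f(\mathbf{A}_m)$), pick for each $f\in F$ an extension $\tilde f\in G$; let $E=\{\tilde f:f\in F\}$. Thickness of $\pi_m^{-1}(T)$ produces $g\in G$ with $g\tilde f\in \pi_m^{-1}(T)$ for every $f\in F$. Setting $x=g|_n$ and using the same identity $(g\tilde f)|_m = x\circ(\tilde f|_{\mathbf{A}_m}) = x\circ f$ shows $x\circ F\subseteq T$, so $T$ is thick by Proposition \ref{Combinatorial}.

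The only nontrivial point is the coset/embedding bookkeeping behind the identity $(gh)|_m = (g|_n)\circ (h|_{\mathbf{A}_m})$ whenever $h(\mathbf{A}_m)\subseteq \mathbf{A}_n$, plus the use of ultrahomogeneity to lift an embedding $f\in\emb{\mathbf{A}_m,\mathbf{A}_n}$ to an element of $G$. Neither is a real obstacle; the proof is essentially a dictionary translation once those two facts are in place.
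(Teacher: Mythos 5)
Your proof is correct and follows essentially the same approach as the paper: both directions are dictionary translations between Proposition \ref{Combinatorial} and the combinatorial definition of thickness in the discrete group $G_d$, using the identity $(gh)|_m = (g|_n)\circ(h|_{\mathbf{A}_m})$ and lifting embeddings to automorphisms. The only cosmetic difference is that the paper phrases the converse via the contrapositive (the complement $S=H_m\setminus T$ is syndetic), whereas you argue directly, but the underlying computation is the same.
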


\proof
Say $T\subseteq H_m$ is thick. Let $g_1,...,g_k\in G_d$. Write $f_i = \pi_m(g_i)$, and find $n\geq m$ so that for each $i\leq k$, we have $f_i\in \emb{\mathbf{A}_m, \mathbf{A}_n}$. By Proposition \ref{Combinatorial}, there is $x\in H_n$ with $x\circ \emb{\mathbf{A}_m, \mathbf{A}_n}\subseteq T$. Find $g\in G_d$ with $g|_n = x$. Then for each $i\leq k$, we have $(gg_i)|_m\in T$, so $g\in \bigcap_{i\leq k} \pi_m^{-1}(T)g_i^{-1}\neq \emptyset$, and $\pi_m^{-1}(T)$ is thick.

Conversely, say $T$ is not thick, so that $S := H_m\setminus T$ is syndetic. By Proposition \ref{Combinatorial}, this means that there is $n\geq m$ so that for every $x\in H_n$, we have $S\cap (x\circ \emb{\mathbf{A}_m, \mathbf{A}_n})\neq \emptyset$. Let $\emb{\mathbf{A}_m, \mathbf{A}_n} = \{f_1,...,f_k\}$, and find $g_1,...,g_k\in G_d$ with $\pi_m(g_i) = f_i$. Fix $g\in G_d$. Then for some $i\leq k$, we have $(gg_i)|_m\in S$. But this implies that $\bigcup_{i\leq k} \pi_m^{-1}(S)g_i^{-1} = G_d$, so $\pi_m^{-1}(T) = G_d\setminus \pi_m^{-1}(S)$ is not thick.
\qedhere
\vspace{3 mm}

\section{Piecewise syndetic sets}
In the previous section, we successfully proved analogues of Definition \ref{ComboDef} and Fact \ref{SemigroupDef} for thick subsets of $H_m$ using a definition resembling Fact \ref{ShiftDef}. It easily follows that these analogues also hold for syndetic subsets of $H_m$, in turn providing a generalization of Theorem 2.9(c) from [BHM]. However, we can not yet conclude that these analogues hold for piecewise syndetic sets. 

Let's briefly consider what this even means. As we have pointed out before, there is no right action on the set $H_m$, so it is not immediate what an analogue of Definition \ref{ComboDef} even looks like for piecewise syndetic sets. Theorem \ref{Sanity} suggests the following statement.

\begin{theorem}
\label{SanityPWS}
$P\subseteq H_m$ is piecewise syndetic iff $\pi_m^{-1}(P)\subseteq G_d$ is piecewise syndetic.
\end{theorem}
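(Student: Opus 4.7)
The plan is to transport both sides of the equivalence through the pullback map induced by $\pi_m$, reducing the statement to Theorem~\ref{Sanity} and the orbit-closure characterization of piecewise syndetic. First I would define $\pi_m^*: 2^{H_m} \to 2^{G_d}$ by $\pi_m^*(\chi) = \chi \circ \pi_m$, so that $\pi_m^*(\chi_T) = \chi_{\pi_m^{-1}(T)}$ for any $T \subseteq H_m$. A direct computation using $(gh)|_m = g \circ h|_m$ shows that $\pi_m^*$ is $G$-equivariant for the two right shift actions: $\pi_m^*(\chi \cdot g) = \pi_m^*(\chi) \cdot g$. Since $\pi_m$ is surjective, $\pi_m^*$ is injective, and being a continuous injection from a compact space into a Hausdorff space, it is a closed topological embedding. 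Combining equivariance with the closed embedding property yields
\begin{align*}
\overline{\chi_{\pi_m^{-1}(P)} \cdot G_d} \;=\; \overline{\pi_m^*(\chi_P \cdot G)} \;=\; \pi_m^*\bigl(\overline{\chi_P \cdot G}\bigr).
\end{align*}

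For the forward direction, if $P$ is piecewise syndetic, then Definition~\ref{MainDef} furnishes syndetic $S \subseteq H_m$ with $\chi_S \in \overline{\chi_P \cdot G}$; applying $\pi_m^*$ gives $\chi_{\pi_m^{-1}(S)} \in \overline{\chi_{\pi_m^{-1}(P)} \cdot G_d}$, and Theorem~\ref{Sanity} (together with the fact that syndetic is the complement of thick) guarantees $\pi_m^{-1}(S)$ is syndetic in $G_d$, so $\pi_m^{-1}(P)$ is piecewise syndetic by Fact~\ref{ShiftDef}. Conversely, if $\pi_m^{-1}(P)$ is piecewise syndetic, pick a syndetic $S' \subseteq G_d$ with $\chi_{S'} \in \overline{\chi_{\pi_m^{-1}(P)} \cdot G_d}$. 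By the displayed equation and injectivity of $\pi_m^*$, $\chi_{S'} = \pi_m^*(\chi_S)$ for a unique $\chi_S \in \overline{\chi_P \cdot G}$, i.e.\ $S' = \pi_m^{-1}(S)$; Theorem~\ref{Sanity} then forces $S$ to be syndetic in $H_m$, and we conclude that $P$ is piecewise syndetic.

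The main subtlety lies in the reverse direction: a priori, a syndetic witness $S' \subseteq G_d$ for $\pi_m^{-1}(P)$ need not be of pullback form, and extracting a suitable $S \subseteq H_m$ by hand would require a combinatorial pigeonhole along an exhausting sequence for the thick set $\bigcup_{g \in E} \pi_m^{-1}(P)g^{-1}$ combined with an ultrafilter limit to produce a syndetic witness in $H_m$, leveraging Proposition~\ref{Combinatorial}. The observation that avoids this work is that the orbit $\chi_{\pi_m^{-1}(P)} \cdot G_d$ already lies in the closed image $\pi_m^*(2^{H_m})$ by equivariance, so its full closure does too, and any witness $S'$ descends automatically to a witness in $H_m$.
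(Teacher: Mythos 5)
Your proof is correct and takes essentially the same approach as the paper: both observe that $\hat\pi_m$ (your $\pi_m^*$) is a $G_d$-equivariant closed embedding of $2^{H_m}$ into $2^{G_d}$, so that $\overline{\chi_{\pi_m^{-1}(P)}\cdot G_d} = \hat\pi_m(\overline{\chi_P\cdot G})$, forcing any syndetic witness in the closure on the $G_d$ side to be of pullback form, and then both directions reduce to Theorem~\ref{Sanity}. Your closing paragraph simply spells out more explicitly why the compactness/closed-image observation avoids a by-hand combinatorial extraction, which is the same point the paper makes with the phrase ``since $2^{H_m}$ is compact.''
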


\proof
The surjection $\pi_m: G_d\rightarrow H_m$ induces a continuous embedding $\hat\pi_m: 2^{H_m}\rightarrow 2^{G_d}$, where given $S\subseteq H_m$, we have $\hat\pi_m(\chi_S) = \chi_{\pi_m^{-1}(S)}$. The group $G_d$ acts on the right on both spaces, and the map $\hat\pi_m$ is $G_d$-equivariant. The statements from Definition \ref{MainDef} do not depend on the topology of $G$, so also hold for $G_d$.

Suppose $P\subseteq H_m$ is piecewise syndetic, and let $S\subseteq H_m$ be syndetic with $\chi_S\in \overline{\chi_P\cdot G_d}$. Then $\chi_{\pi_m^{-1}(S)}\in \overline{\chi_{\pi_m^{-1}(P)}\cdot G_d}$, and $\pi_m^{-1}(S)\subseteq G_d$ is syndetic by Theorem \ref{Sanity}. It follows that $\pi_m^{-1}(P)$ is piecewise syndetic.

Conversely, suppose $P\subseteq H_m$ is a set with $\pi_m^{-1}(P)\subseteq G_d$ piecewise syndetic. Then there is syndetic $W\subseteq G_d$ with $W\in \overline{\chi_{\pi_m^{-1}(P)}\cdot G_d}$. But since $2^{H_m}$ is compact, we must have that $W = \pi_m^{-1}(S)$ for some $S\subseteq H_m$, and this $S$ is syndetic by Theorem \ref{Sanity}. It follows that $P$ is piecewise syndetic.
\qedhere
\vspace{3 mm}

Let us briefly return to the setting where $G$ is a discrete group. It is a fact that $P\subseteq G$ is piecewise syndetic iff $P = S\cap T$ for some syndetic $S\subseteq G$ and thick $T\subseteq G$. Let us recall one direction of the proof. Say $P\subseteq G$ is piecewise syndetic, and let $g_1,...,g_k\in G$ with $T:= Pg_1^{-1}\cupdots Pg_k^{-1}$ thick; we may assume that $k$ is minimal and that $g_1 = 1_G$ (if $T$ is thick, then so is $Tg_1$). Then setting $S = (G\setminus T)\cup P$, we have that $S$ is syndetic (since $Sg_1^{-1}\cupdots Sg_k^{-1} = G$) and that $P = S\cap T$. Notice that $T\setminus P\subseteq G\setminus S$ is not thick.

Returning to the setting where $G = \aut{\mathbf{K}}$, we can ask whether these statements also hold for $H_m$. Given a thick $T\subseteq H_m$, set $\mathrm{Dest}(T) = \{P\subseteq H_m: T\setminus P \text{ is not thick}\}$; if $P\in \mathrm{Dest}(T)$, we say that $P$ \emph{destroys} $T$. The main theorems of this section are the following, generalizing Theorem 2.4 from [BHM]:

\begin{theorem}
\label{Destruction}
$P\subseteq H_m$ is piecewise syndetic iff $P\in \mathrm{Dest}(T)$ for some thick $T\subseteq H_m$.
\end{theorem}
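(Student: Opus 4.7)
The easier direction is $(\Leftarrow)$, which I would prove by a compactness argument. Given thick $T$ with $T\setminus P$ not thick, Proposition \ref{Combinatorial} produces a sequence $g_n\in G$ with $\chi_T\cdot g_n\to \chi_{H_m}$; since $H_m$ is countable, pass to a subsequence so that $\chi_P\cdot g_n\to \chi_S$ for some $S\subseteq H_m$, whence $\chi_S\in \overline{\chi_P\cdot G}$. If $S$ were not syndetic, then $H_m\setminus S$ would be thick, witnessed by an exhausting sequence $\{h_N\}_{N\geq m}$ with $h_N\circ \emb{\mathbf{A}_m,\mathbf{A}_N}\subseteq H_m\setminus S$. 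For fixed $N$, each $f'$ in the finite set $h_N\circ \emb{\mathbf{A}_m,\mathbf{A}_N}$ satisfies $g_n f'\in T$ and $g_n f'\notin P$ for all sufficiently large $n$, so a common threshold $n^*(N)$ yields $g_{n^*(N)}h_N\circ \emb{\mathbf{A}_m,\mathbf{A}_N}\subseteq T\setminus P$. The exhausting sequence $y_N := g_{n^*(N)}\circ h_N$ would then make $T\setminus P$ thick, contradicting the hypothesis.

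For $(\Rightarrow)$ the plan first establishes a structural claim: if $P$ is piecewise syndetic, then some minimal right ideal $Y = \varprojlim Y_n\subseteq S(G)$ satisfies $Y_m\cap \overline{P}\neq \emptyset$. Choose syndetic $S$ with $\chi_S\in \overline{\chi_P\cdot G}$ and a minimal subflow $\mathcal{M}\subseteq \overline{\chi_S\cdot G}$; syndeticity forces $\chi_\emptyset\notin \mathcal{M}$. Lifting $\mathcal{M}$ through the universal ambit map $\alpha\mapsto \chi_P\cdot \alpha$ from $S(G)$ to $\overline{\chi_P\cdot G}$, the closed right ideal $\{\alpha : \chi_P\cdot \alpha\in \mathcal{M}\}$ contains a minimal right ideal $Y$ with $\chi_P\cdot Y = \mathcal{M}$. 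For any $\alpha\in Y$, the set $S':= \alpha^{-1}(P)$ satisfies $\chi_{S'} = \chi_P\cdot \alpha\in \mathcal{M}$, and since $\overline{\chi_{S'}\cdot G}\subseteq \mathcal{M}$ misses $\chi_\emptyset$, $S'$ is syndetic. The syndetic dualization of Theorem \ref{ThickUlts} gives $Y_m\cap \overline{S'}\neq \emptyset$; pick $p\in Y_m\cap \overline{S'}$ and $\gamma\in Y$ with $\gamma(m) = p$. Right-ideal absorption $Y\cdot S(G)\subseteq Y$ forces $\alpha\gamma\in Y$, so $\alpha\cdot p = (\alpha\gamma)(m)\in Y_m$, and $\alpha^{-1}(P) = S'\in p$ yields $P\in \alpha\cdot p$; hence $\alpha\cdot p\in Y_m\cap \overline{P}$.

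With this $Y$ in hand, let $F_m$ be its filter of clopen neighborhoods; by Theorem \ref{ThickUlts}, $F_m\in \beta(\mathcal{T}_m)$, and $Y_m\cap \overline{P}\neq \emptyset$ translates to $\{P\}\cup F_m$ having the finite intersection property. Suppose for contradiction that $T\setminus P$ is thick for every $T\in F_m$; then
\[\mathcal{F}^* := \{A\subseteq H_m : A\supseteq T\setminus P\text{ for some } T\in F_m\}\]
is a $\mathcal{T}_m$-filter (closed under intersection via $(T_1\setminus P)\cap (T_2\setminus P) = (T_1\cap T_2)\setminus P$ with $T_1\cap T_2\in F_m$), contains $F_m$ (since $T\supseteq T\setminus P$), and contains $H_m\setminus P$. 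Maximality of $F_m$ then forces $\mathcal{F}^* = F_m$, giving $H_m\setminus P\in F_m$ and contradicting the FIP of $\{P\}\cup F_m$. So some thick $T\in F_m$ has $T\setminus P$ not thick, that is, $P\in \mathrm{Dest}(T)$. The principal obstacle is the structural step in $(\Rightarrow)$: lacking any finite-union analogue of ``$\bigcup_i Pg_i^{-1}$ thick'' from the discrete-group case of [BHM], one is pushed into the semigroup geometry of $S(G)$ and must exploit right-ideal absorption to relocate $\alpha\cdot p$ back inside $Y_m$.
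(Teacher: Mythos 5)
Your proposal is correct, and it follows essentially the same route as the paper's proof. The paper proves Theorem~\ref{Destruction} as an immediate consequence of Theorem~\ref{NEqualsI} ($\mathcal{N}_m = \mathcal{I}_m$), whose proof has exactly the three ingredients you use: Proposition~\ref{PWSM} to locate a minimal right ideal $Y = \varprojlim Y_n$ with $Y_m\cap\overline{P}\neq\emptyset$; Theorem~\ref{ThickUlts} to pass to the associated thick ultrafilter $F_m\in\beta(\mathcal{T}_m)$; and a maximality argument (packaged in the paper as Lemma~\ref{ThickAvoidI}, which your $\mathcal{F}^*$-construction rederives inline) to conclude that some $T\in F_m$ has $T\setminus P$ not thick. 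Your rederivation of Proposition~\ref{PWSM} via the minimal subflow $\mathcal{M}\subseteq\overline{\chi_S\cdot G}$ and right-ideal absorption is a bit more elaborate than the paper's one-liner (pick $g\in G$ with $g|_m\in\alpha^{-1}(P)$ and use $G$-invariance of $Y$), but it is sound. In the $(\Leftarrow)$ direction you argue directly (extracting a syndetic limit $\chi_S$ of $\chi_P\cdot g_n$) rather than via the contrapositive as in Proposition~\ref{NSubsetI}, which is a pleasant alternative but amounts to the same kind of exhausting-sequence bookkeeping.
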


\begin{theorem}
\label{ThickIntSynd}
$P\subseteq H_m$ is piecewise syndetic iff there are thick $T\subseteq H_m$ and syndetic $S\subseteq H_m$ with $P = S\cap T$.
\end{theorem}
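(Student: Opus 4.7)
The plan is to derive this directly from Theorem \ref{Destruction}, which characterizes piecewise syndetic sets as destroyers of thick sets. The two directions require quite different effort.

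For the easy direction ($\Leftarrow$), suppose $P = S \cap T$ with $S$ syndetic and $T$ thick. Then $T \setminus P = T \setminus S \subseteq H_m \setminus S$. Since $S$ is syndetic, $H_m \setminus S$ is not thick, and since the collection of non-thick sets is downward closed, $T \setminus P$ is not thick either. Thus $P \in \mathrm{Dest}(T)$, so by Theorem \ref{Destruction}, $P$ is piecewise syndetic.

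For the harder direction ($\Rightarrow$), suppose $P$ is piecewise syndetic. By Theorem \ref{Destruction}, pick some thick $T_0 \subseteq H_m$ with $T_0 \setminus P$ not thick. The main trick, borrowed from the classical discrete-group argument recalled in the paragraph just before Theorem \ref{Destruction}, is to enlarge $T_0$ so that it contains $P$ without breaking thickness or destruction. Namely, set $T := T_0 \cup P$. Since $\mathcal{T}_m$ is upward closed, $T$ is still thick. Moreover $T \setminus P = T_0 \setminus P$ is still not thick, and now $P \subseteq T$.

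With $P \subseteq T$ in hand, define $S := (H_m \setminus T) \cup P$. Then $S \cap T = P$, and $H_m \setminus S = T \setminus P$ is not thick, so $S$ is syndetic by the definition of syndeticity (equivalently, by Fact~\ref{ShiftDef} generalized in Definition \ref{MainDef}). This gives the required decomposition $P = S \cap T$.

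The only conceptual step is the replacement $T_0 \rightsquigarrow T_0 \cup P$ which ensures $P \subseteq T$; everything else is elementary set theory together with the fact that $\mathcal{T}_m$ is upward closed and that ``not thick'' is equivalent to ``complement is syndetic.'' I do not anticipate any obstacle beyond citing Theorem \ref{Destruction}; the main content has already been absorbed there.
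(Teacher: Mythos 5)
Your proof is correct and follows essentially the same route as the paper: both directions reduce to Theorem \ref{Destruction} (equivalently $P\notin\mathcal{I}_m$), and the forward direction uses the same decomposition $S = (H_m\setminus T)\cup P$ after arranging $P\subseteq T$. The only difference is that you spell out the replacement $T_0\rightsquigarrow T_0\cup P$, which the paper compresses into ``we may assume $P\subseteq T$.''
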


Let us remark that Theorems \ref{Destruction} and \ref{ThickIntSynd} are not corollaries of Theorem \ref{SanityPWS}. The best we can do is to note that given $S\subseteq H_m$ and $g\in G$, we have $\pi_m^{-1}(S)g^{-1} = \pi_n^{-1}(g|_m(S))$ for $n\geq m$ large enough with $g|_m(\mathbf{A}_m)\subseteq \mathbf{A}_n$. Then using Theorem \ref{SanityPWS}, we can say that if $P\subseteq H_m$ is piecewise syndetic, there is some $n\geq m$ and some thick $T\subseteq H_n$ with $i^n_m(P)\in \mathrm{Dest}(T)$. However, it is not immediately clear that we can take $n= m$. In the preprint [Z1], it is mistakenly asserted (see the discussion at the end of page 23) that Theorem \ref{Destruction} is somehow obvious. So while the result is true, the proof is a bit more difficult.

Let us start by obtaining a more combinatorial characterization of piecewise syndetic sets. The proof is similar to the proof of Proposition \ref{Combinatorial}.

\begin{prop}
\label{ComboPWS}
$P\subseteq H_m$ is piecewise syndetic iff there is $n\geq m$ and an exhausting sequence $\vec{X} = \{x_N: N<\omega\}$ so that for every $x\in \emb{\mathbf{A}_n, \vec{X}}$, we have \newline $P\cap (x\circ \emb{\mathbf{A}_m, \mathbf{A}_n})\neq \emptyset$.
\end{prop}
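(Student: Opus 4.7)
The plan is to adapt the argument of Proposition \ref{Combinatorial} by combining two levels of structure: an intermediate level $n\geq m$ that witnesses syndeticity of some limit point of $\chi_P\cdot G$, and the exhausting level $N\geq n$ from which the sequence $\vec{X}$ is built.

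For the forward direction, suppose $P\subseteq H_m$ is piecewise syndetic, and fix syndetic $S\subseteq H_m$ with $\chi_S\in \overline{\chi_P\cdot G}$. Applying Proposition \ref{Combinatorial} to the non-thick set $H_m\setminus S$, fix $n\geq m$ such that $S\cap(y\circ\emb{\mathbf{A}_m,\mathbf{A}_n})\neq\emptyset$ for every $y\in H_n$. Since $\chi_S\in \overline{\chi_P\cdot G}$ and each $\emb{\mathbf{A}_m,\mathbf{A}_N}$ is finite, for each $N\geq n$ we may choose $g_N\in G$ with $(\chi_P\cdot g_N)(f) = \chi_S(f)$ for every $f\in\emb{\mathbf{A}_m,\mathbf{A}_N}$; put $x_N = g_N|_N$, and pick $x_N$ arbitrarily for $N<n$. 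Given any $h\in\emb{\mathbf{A}_n,\mathbf{A}_N}\subseteq H_n$, the syndeticity of $S$ at level $n$ yields $f\in\emb{\mathbf{A}_m,\mathbf{A}_n}$ with $h\circ f\in S$; since $\ran{h\circ f}\subseteq\mathbf{A}_N$, we have $x_N\circ h\circ f = g_N\circ h\circ f\in P$, which gives the desired non-emptiness of $P\cap (x_N\circ h\circ\emb{\mathbf{A}_m,\mathbf{A}_n})$.

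For the reverse direction, given $\vec{X}$ and $n$ as in the hypothesis, lift each $x_N$ to $g_N\in G$ with $g_N|_N = x_N$, and extract a subsequence with $\chi_P\cdot g_{N_k}\rightarrow \chi_S$ using compactness of $2^{H_m}$. Then $\chi_S\in\overline{\chi_P\cdot G}$, so it remains to verify that $S$ is syndetic via Proposition \ref{Combinatorial}. Fix $y\in H_n$ and pick $N_0\geq n$ with $y\in\emb{\mathbf{A}_n,\mathbf{A}_{N_0}}$; for each $k$ with $N_k\geq N_0$ the hypothesis provides some $f_k\in\emb{\mathbf{A}_m,\mathbf{A}_n}$ with $x_{N_k}\circ y\circ f_k\in P$. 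Since $\emb{\mathbf{A}_m,\mathbf{A}_n}$ is finite, pigeonhole picks out a single $f$ with $f_k = f$ for infinitely many $k$; along that subsequence $(\chi_P\cdot g_{N_k})(y\circ f) = 1$, and passing to the limit gives $y\circ f\in S\cap(y\circ\emb{\mathbf{A}_m,\mathbf{A}_n})$.

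The main obstacle is the reverse direction: without the pigeonhole step one only knows that for each $y\in H_n$ and each $k$ there is \emph{some} $f_k$ landing in $P$, which is insufficient to force $\chi_S$ to hit $y\circ\emb{\mathbf{A}_m,\mathbf{A}_n}$. One also has to be careful throughout to identify $g_N\circ h\circ f$ with $x_N\circ h\circ f$, which relies on $\ran{h\circ f}\subseteq \mathbf{A}_N$ so that only the coset $g_N G_N$ matters.
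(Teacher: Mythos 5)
Your proof is correct and takes essentially the same approach as the paper's: choose $g_N$ agreeing with a syndetic limit $\chi_S$ on $\emb{\mathbf{A}_m,\mathbf{A}_N}$ and set $x_N = g_N|_N$ for the forward direction, and lift and pass to a convergent subsequence for the converse. The only difference is that you make explicit the pigeonhole step (fixing a single $f\in\emb{\mathbf{A}_m,\mathbf{A}_n}$ that recurs) that the paper leaves implicit, which is a worthwhile clarification.
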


\proof
Suppose $P\subseteq H_m$ is piecewise syndetic. Find $g_N\in G$ a sequence of group elements so that $\chi_P\cdot g_N\rightarrow \chi_S$ for some syndetic $S\subseteq H_m$. We may assume that $\chi_P\cdot g_N(f) = \chi_S(f)$ for every $f\in \emb{\mathbf{A}_m, \mathbf{A}_N}$. Since $S$ is syndetic, there is some $n\geq m$ with $S\cap (x\circ \emb{\mathbf{A}_m, \mathbf{A}_n})\neq\emptyset$ for every $x\in H_n$. Setting $x_N = g_N|_N$, it follows that the conclusion holds.

Conversely, suppose $P\subseteq H_m$ is a set with the property described by the proposition. Find $g_N\in G$ with $g_N|_N = x_N$. Pass to a subsequence so that $\chi_P\cdot g_N|_N\rightarrow \chi_S$ for some $S\subseteq H_m$. This $S$ has the property that $S\cap x\circ \emb{\mathbf{A}_m, \mathbf{A}_n}\neq\emptyset$ for every $x\in H_n$, so $S$ is syndetic, and $P$ is therefore piecewise syndetic.
\qedhere
\vspace{3 mm}

Recall that $\mathcal{P}_m$ is the collection of piecewise syndetic subsets of $H_m$. Given a set $X$ and a family $\mathcal{S}$ on $X$, we say that $\mathcal{S}$ is \emph{partition regular} if given $S\in \mathcal{S}$, then whenever we write $S = S_1\cup S_2$, some $S_i\in \mathcal{S}$.

\begin{theorem}
\label{PWSPR}
$\mathcal{P}_m$ is partition regular.
\end{theorem}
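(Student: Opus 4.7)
The plan is to prove partition regularity of $\mathcal{P}_m$ by examining the joint orbit closure of $\chi_P, \chi_{P_1}, \chi_{P_2}$ inside the product $G$-flow $(2^{H_m})^3$. The crucial observation is that pointwise maximum on $2^{H_m}$ is continuous, so the identity $\chi_{B_1 \cup B_2} = \chi_{B_1} \vee \chi_{B_2}$ is preserved when passing to orbit closures.

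I would set $Z = \overline{(\chi_P, \chi_{P_1}, \chi_{P_2}) \cdot G} \subseteq (2^{H_m})^3$, so every $(\chi_A, \chi_{B_1}, \chi_{B_2}) \in Z$ automatically satisfies $A = B_1 \cup B_2$. Since $P$ is piecewise syndetic, fix a syndetic $S \subseteq H_m$ with $\chi_S \in \overline{\chi_P \cdot G}$, and let $M$ be a minimal subflow of $\overline{\chi_S \cdot G}$. Because $\chi_\emptyset \notin \overline{\chi_S \cdot G}$, we have $\chi_\emptyset \notin M$; then for any $\chi_{S'} \in M$, minimality forces $\overline{\chi_{S'}\cdot G} = M \not\ni \chi_\emptyset$, so $S'$ is syndetic.

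Next I would observe that $\pi_1^{-1}(M) \cap Z$ is a nonempty closed $G$-invariant subset of $Z$ (nonempty because $M \subseteq \overline{\chi_P\cdot G} = \pi_1(Z)$), so it contains a minimal subflow $Y$. The projection $\pi_1(Y) \subseteq M$ is a subflow of the minimal flow $M$, hence $\pi_1(Y) = M$. If both minimal subflows $\pi_2(Y), \pi_3(Y) \subseteq 2^{H_m}$ equalled $\{\chi_\emptyset\}$, then every $y \in Y$ would have second and third coordinates $\chi_\emptyset$, forcing the first coordinate to be $\chi_\emptyset$ as well---contradicting $\pi_1(Y) = M \not\ni \chi_\emptyset$. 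So, without loss of generality, $\pi_2(Y) \ne \{\chi_\emptyset\}$; by minimality this gives $\chi_\emptyset \notin \pi_2(Y)$, and then for any $\chi_A \in \pi_2(Y)$ we have $\overline{\chi_A\cdot G} = \pi_2(Y) \not\ni \chi_\emptyset$, i.e., $A$ is syndetic. Since $\pi_2(Y) \subseteq \overline{\chi_{P_1}\cdot G}$, this exhibits a syndetic indicator in the orbit closure of $\chi_{P_1}$, so $P_1 \in \mathcal{P}_m$.

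The main subtlety I expect is the choice of $M$ inside $\overline{\chi_S\cdot G}$ rather than the larger $\overline{\chi_P\cdot G}$: this is what guarantees $\chi_\emptyset \notin M$, which simultaneously rules out the degenerate possibility $\pi_2(Y) = \pi_3(Y) = \{\chi_\emptyset\}$ and upgrades ``nonempty'' to ``syndetic'' in the conclusion about $\pi_2(Y)$. Everything else is bookkeeping with minimal subflows and continuity of coordinate projections, so no serious obstacles beyond this.
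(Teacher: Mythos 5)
Your proof is correct, and it takes a genuinely different route from the paper's. The paper first establishes a combinatorial characterization of piecewise syndeticity in terms of exhausting sequences (Proposition~\ref{ComboPWS}) and then argues combinatorially: given a sequence witnessing $P = P_1\cup P_2$, if $P_1$ fails the criterion everywhere, one builds a modified exhausting sequence witnessing $P_2$. You bypass that lemma entirely and work directly from Definition~\ref{MainDef}: the key move of forming the joint orbit closure $Z$ of $(\chi_P,\chi_{P_1},\chi_{P_2})$ and observing that the coordinate-wise identity $\chi_P=\chi_{P_1}\vee\chi_{P_2}$ is a closed $G$-invariant condition (by continuity and $G$-equivariance of $\vee$ on $2^{H_m}$) persists in $Z$, after which locating a minimal subflow $Y\subseteq \pi_1^{-1}(M)\cap Z$ and projecting is standard flow bookkeeping. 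Your choice to take $M$ minimal inside $\overline{\chi_S\cdot G}$ rather than $\overline{\chi_P\cdot G}$ is indeed the crucial point: it guarantees $\chi_\emptyset\notin M$, which both rules out $\pi_2(Y)=\pi_3(Y)=\{\chi_\emptyset\}$ and upgrades a nontrivial projection to a syndetic witness. The paper's route is more explicit and hands you a concrete witnessing sequence for one of the $P_i$; yours is softer and more conceptual, works verbatim for abstract right shift flows on $2^X$, and requires no combinatorial reformulation.
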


\begin{proof}
Let $P = P_1\cup P_2$ be piecewise syndetic; using Proposition \ref{ComboPWS}, say this is witnessed by $n\in \mathbb{N}$ and $(x_N)_{N\geq n}$, $x_N\in H_N$. If $P_1$ is not piecewise syndetic, then for each $n_0\in \mathbb{N}$, there are $N\in \mathbb{N}$ and $y_{n_0}\in x_N\circ \emb{\mathbf{A}_{n_0}, \mathbf{A}_n}$ with $P_1\cap (y_{n_0}\circ \emb{\mathbf{A}_m, \mathbf{A}_{n_0}}) = \emptyset$. But now we see that $n\in \mathbb{N}$ and $(y_{n_0})_{n_0\geq n}$ witness that $P_2$ is piecewise syndetic.
\end{proof}

Write $\mathcal{N}_m := \mathcal{P}(H_m)\setminus \mathcal{P}_m$ for the collection of not-piecewise-syndetic subsets of $H_m$. We have just proven that $\mathcal{N}_m$ is an ideal. There is another natural ideal one could consider on $H_m$:
\begin{align*}
\mathcal{I}_m := \{A\subseteq H_m: \forall\, T\in \mathcal{T}_m (T\setminus A \in \mathcal{T}_m)\}.
\end{align*}
In other words, members of $\mathcal{I}_m$ are exactly those sets which do not destroy any thick set. We will eventually see that $\mathcal{N}_m = \mathcal{I}_m$, and we will use this to prove Theorems \ref{Destruction} and \ref{ThickIntSynd}.

\begin{prop}
\label{NSubsetI}
$\mathcal{N}_m\subseteq \mathcal{I}_m$.
\end{prop}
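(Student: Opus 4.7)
I would prove the contrapositive: if $A\subseteq H_m$ destroys some thick set $T\subseteq H_m$, then $A$ is piecewise syndetic. The plan is to unpack both ``$T$ is thick'' and ``$T\setminus A$ is not thick'' via the combinatorial characterization of Proposition \ref{Combinatorial}, and then assemble an exhausting sequence which witnesses that $A$ is piecewise syndetic via Proposition \ref{ComboPWS}.

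Concretely, since $T\setminus A$ is not thick, fix a level $n\geq m$ such that for every $z\in H_n$, we have $z\circ \emb{\mathbf{A}_m,\mathbf{A}_n}\not\subseteq T\setminus A$. Meanwhile, using the thickness of $T$ at each $N\geq n$, choose $x_N\in H_N$ with $x_N\circ \emb{\mathbf{A}_m,\mathbf{A}_N}\subseteq T$; this gives an exhausting sequence $\vec{X}=(x_N)_{N\geq n}$ (extend arbitrarily for $N<n$) with $\emb{\mathbf{A}_m,\vec{X}}\subseteq T$.

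The key observation is the following: for any $y\in \emb{\mathbf{A}_n,\vec{X}}$, write $y = x_N\circ h$ for some $N\geq n$ and $h\in \emb{\mathbf{A}_n,\mathbf{A}_N}$. Then
\[
y\circ \emb{\mathbf{A}_m,\mathbf{A}_n} = x_N\circ h\circ \emb{\mathbf{A}_m,\mathbf{A}_n}\subseteq x_N\circ \emb{\mathbf{A}_m,\mathbf{A}_N}\subseteq T.
\]
Since $y\in H_n$, the choice of $n$ guarantees $y\circ \emb{\mathbf{A}_m,\mathbf{A}_n}\not\subseteq T\setminus A$, and combining with the inclusion just shown forces $A\cap(y\circ \emb{\mathbf{A}_m,\mathbf{A}_n})\neq\emptyset$. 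By Proposition \ref{ComboPWS}, $A$ is piecewise syndetic, completing the contrapositive.

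There is no real obstacle here beyond bookkeeping; the only subtle point is to align the two combinatorial characterizations at a \emph{single} common level $n$, and this works because thickness of $T$ gives witnesses at every level while non-thickness of $T\setminus A$ requires only one.
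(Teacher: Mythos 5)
Your proof is correct and takes essentially the same approach as the paper's, just run in contrapositive form: the paper directly shows that for $W\in\mathcal{N}_m$ and any thick $T$, the set $T\setminus W$ is thick, using the same two combinatorial characterizations (Propositions~\ref{Combinatorial} and~\ref{ComboPWS}) and the same exhausting-sequence bookkeeping you use.
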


\begin{proof}
Let $W\in \mathcal{N}_m$, and fix $T\in \mathcal{T}_m$. Find for each $N\geq m$ an $x_N\in H_N$ so that $x_N\circ \emb{\mathbf{A}_m, \mathbf{A}_N}\subseteq T$. As $W$ is not piecewise syndetic, we can find for each $n\geq m$ an $N\geq n$ and a $y_n\in x_N\circ \emb{\mathbf{A}_n, \mathbf{A}_N}$ with $W\cap (y\circ \emb{\mathbf{A}_m, \mathbf{A}_n}) = \emptyset$. But now set $U = \bigcup_n y_n\circ \emb{\mathbf{A}_m, \mathbf{A}_N}$. We see that $U$ is thick and $U\subseteq T\setminus W$.
\end{proof}

To prove the other inclusion, we need a characterization of piecewise syndetic sets similar to Fact \ref{SemigroupDef}. Call $S\subseteq H_m$ \emph{minimal} if $\overline{\chi_S\cdot G}$ is a minimal $G$-flow. Note that any non-empty minimal set is syndetic.

\begin{prop}
\label{PWSM}
Let $P\subseteq H_m$. Then $P$ is piecewise syndetic iff there is some minimal right ideal $Y\subseteq \varprojlim \beta H_n$ and some $\alpha\in Y$ with $P\in \alpha(m)$. Equivalently, $P$ is piecewise syndetic iff $\tilde{P}\subseteq S(G)$ meets some minimal right ideal.
\end{prop}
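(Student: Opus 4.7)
The plan is to use the surjective ambit $G$-map $\Phi:S(G)\to \overline{\chi_P\cdot G}\subseteq 2^{H_m}$ defined by $\Phi(\alpha) = \chi_P\cdot \alpha = \chi_{\alpha^{-1}(P)}$, and translate between minimal right ideals in $S(G)$ and minimal subflows in $\overline{\chi_P\cdot G}$. The two formulations of the proposition are immediately equivalent by unpacking $\tilde P$: since $\alpha(m) = \alpha\cdot i_m$, we have $P\in \alpha(m)$ iff $i_m\in \alpha^{-1}(P)$, so $\tilde P\cap Y\neq \emptyset$ iff some $\alpha\in Y$ has $P\in \alpha(m)$.

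For the forward direction, I would start from Definition \ref{MainDef} and pick syndetic $S\subseteq H_m$ with $\chi_S\in \overline{\chi_P\cdot G}$. Since $S$ is syndetic, $\chi_\emptyset\notin \overline{\chi_S\cdot G}$, so any minimal subflow $M\subseteq \overline{\chi_S\cdot G}$ avoids $\chi_\emptyset$. Using the general fact that every minimal subflow of the target of a surjective $G$-map pulls back to some minimal subflow of the source, I would write $M = \Phi(Y)$ for some minimal right ideal $Y$ of $S(G)$. Picking $\chi_{S'}\in M$ and lifting to $\alpha\in Y$ gives $\alpha^{-1}(P) = S'$, which is nonempty since $\chi_{S'}\neq \chi_\emptyset$. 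Choose any $f\in S'$ and any $g\in G$ with $g|_m = f$; using that $Y$ is a right ideal, $\alpha g\in Y$, and $(\alpha g)(m) = \alpha\cdot f$ contains $P$ because $f\in \alpha^{-1}(P)$ means $P\in \alpha\cdot f$. Hence $\tilde P\cap Y\neq \emptyset$.

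For the reverse direction, suppose $\alpha$ lies in a minimal right ideal $Y$ with $P\in \alpha(m)$, and set $S := \alpha^{-1}(P)$. Then $\chi_S = \chi_P\cdot \alpha\in \overline{\chi_P\cdot G}$ by continuity of $\Phi$ and density of $G$ in $S(G)$, and $i_m\in S$ by the equivalence noted above. The image $\Phi(Y)$ is a minimal subflow of $2^{H_m}$ (as the continuous $G$-equivariant image of the minimal flow $Y$) and contains $\chi_S$, so $\Phi(Y) = \overline{\chi_S\cdot G}$. Since $\chi_S\neq \chi_\emptyset$ while $\{\chi_\emptyset\}$ is a distinct fixed-point subflow, disjointness of minimal flows gives $\chi_\emptyset\notin \overline{\chi_S\cdot G}$, so $S$ is syndetic. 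Combined with $\chi_S\in \overline{\chi_P\cdot G}$, this shows $P$ is piecewise syndetic.

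The main obstacle is a bookkeeping subtlety in the forward direction: the natural lift $\alpha\in Y$ of $\chi_{S'}\in M$ only satisfies $\alpha^{-1}(P) = S'\neq\emptyset$, which does not force $i_m\in S'$; one obtains $P\in \alpha\cdot f$ for some $f\in S'$ rather than $P\in \alpha(m)$ directly. The fix is a right-translation by $g\in G$ with $g|_m = f$, which keeps us inside $Y$ precisely because $Y$ is a right ideal.
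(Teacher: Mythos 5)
Your proof is correct and takes essentially the same route as the paper: the paper replaces $\alpha$ by $\alpha\gamma$ for $\gamma$ in a minimal right ideal (the semigroup-theoretic phrasing of your ``pull $M$ back to a minimal right ideal $Y$''), uses syndeticity of $S$ to get $\alpha^{-1}(P)\neq\emptyset$, and then right-translates by a suitable $g\in G$ exactly as you do; for the reverse direction the paper likewise observes that $\alpha^{-1}(P)$ is a non-empty minimal set (your disjointness-of-minimal-subflows argument is precisely the reason a non-empty minimal set is syndetic). Your exposition is a bit more explicit about why the pullback exists and why $\chi_\emptyset$ is avoided, but these are unpackings of the same steps, not a different proof.
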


\begin{proof}
First, suppose there are $Y$ and $\alpha\in Y$ as above with $P\in \alpha(m)$. Then $\overline{\chi_P\cdot \alpha\cdot G}$ is a minimal $G$-flow, so $\alpha^{-1}(P)$ is a minimal set. Since $i_m\in \alpha^{-1}(P)$, we see that $\alpha^{-1}(P)$ is syndetic, so $P$ is piecewise syndetic.

For the converse, suppose $P$ is piecewise syndetic, where $S$ is syndetic and $\chi_S = \chi_P\cdot \alpha$, $\alpha\in \varprojlim \beta H_n$. By replacing $\alpha$ with $\alpha\cdot \gamma$ for $\gamma\in \varprojlim \beta H_n$ in some minimal right ideal, we can assume that $\alpha$ is in some minimal right ideal $Y$. Fix $g\in G$ with $g|_m\in \alpha^{-1}(P)$. Then $P\in \alpha g(m)$ and $\alpha\cdot g\in Y$.
\end{proof}

\begin{lemma}
\label{ThickAvoidI}
Let $F_m\in \beta(\mathcal{T}_m)$. Then for any $A\in \mathcal{I}_m$, we have $H_m\setminus A\in F_m$. 
\end{lemma}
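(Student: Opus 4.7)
The plan is to argue by contradiction using the maximality of the $\mathcal{T}_m$-ultrafilter $F_m$. Suppose $A \in \mathcal{I}_m$ but $H_m \setminus A \notin F_m$. Then $F_m$ cannot be extended by adding $H_m \setminus A$ while staying inside $\mathcal{T}_m$; otherwise maximality of $F_m$ would already place $H_m \setminus A$ in $F_m$.

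First I would write down the filter $F'$ generated by $F_m \cup \{H_m \setminus A\}$. Since $F_m$ is already closed under finite intersection, every element of $F'$ contains a set of the form $T \cap (H_m \setminus A) = T \setminus A$ for some $T \in F_m$. So $F'$ is a filter on $H_m$ (it is proper provided none of these generators is empty).

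Next, I would invoke the hypothesis $A \in \mathcal{I}_m$: by the definition of $\mathcal{I}_m$, for every thick set $T$ we have $T \setminus A \in \mathcal{T}_m$. Since every $T \in F_m$ is thick, each generator $T \setminus A$ of $F'$ is thick, hence nonempty, so $F'$ is a proper filter. Moreover, because $\mathcal{T}_m$ is upward-closed, every element of $F'$ (being a superset of some thick $T \setminus A$) is thick. Thus $F' \subseteq \mathcal{T}_m$, i.e.\ $F'$ is a $\mathcal{T}_m$-filter.

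Finally, $F'$ properly contains $F_m$, since $H_m \setminus A \in F' \setminus F_m$. This contradicts the maximality of $F_m$ as a $\mathcal{T}_m$-filter, so we must have $H_m \setminus A \in F_m$. There is no real obstacle here: the proof is a direct unpacking of the definitions of $\beta(\mathcal{T}_m)$ and $\mathcal{I}_m$, with the only thing to check being that the extension of $F_m$ by $H_m \setminus A$ indeed lands inside $\mathcal{T}_m$, which is exactly what membership of $A$ in $\mathcal{I}_m$ guarantees.
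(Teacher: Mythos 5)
Your proof is correct and takes essentially the same approach as the paper: both observe that, since $A\in\mathcal{I}_m$, the set $T\setminus A = T\cap(H_m\setminus A)$ is thick for every $T\in F_m$, so $F_m\cup\{H_m\setminus A\}$ generates a $\mathcal{T}_m$-filter, and maximality of $F_m$ then forces $H_m\setminus A\in F_m$. The contradiction framing you use is slightly more roundabout than the paper's direct phrasing, but the substance is identical.
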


\begin{proof}
Fix $A\in \mathcal{I}_m$, and let $B = H_m\setminus A$. Then for every $T\in \mathcal{T}_m$, we have $T\cap B\in \mathcal{T}_m$. In particular, $F_m\cup\{B\}$ generates a thick filter, so by maximality of $F_m$ we have $B\in F_m$.
\end{proof}

\begin{theorem}
\label{NEqualsI}
$\mathcal{N}_m = \mathcal{I}_m$.
\end{theorem}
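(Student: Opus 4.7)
The inclusion $\mathcal{N}_m \subseteq \mathcal{I}_m$ is already Proposition \ref{NSubsetI}, so the plan is to establish the reverse inclusion $\mathcal{I}_m \subseteq \mathcal{N}_m$ by contrapositive: assuming $A \subseteq H_m$ is piecewise syndetic, I will produce a thick set $T$ with $T \setminus A$ not thick, contradicting $A \in \mathcal{I}_m$.

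The key tool is the ultrafilter characterization of piecewise syndetic sets in Proposition \ref{PWSM} combined with Theorem \ref{ThickUlts}. Suppose for contradiction that $A \in \mathcal{I}_m \cap \mathcal{P}_m$. By Proposition \ref{PWSM}, there exist a minimal right ideal $Y = \varprojlim Y_n \subseteq S(G)$ and some $\alpha \in Y$ with $A \in \alpha(m)$. Let $F_m$ be the filter of clopen neighborhoods of $Y_m$; by Theorem \ref{ThickUlts}, $F_m \in \beta(\mathcal{T}_m)$. Now Lemma \ref{ThickAvoidI} applied to $A \in \mathcal{I}_m$ gives $H_m \setminus A \in F_m$. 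But $\alpha(m) \in Y_m$ and $F_m = \bigcap_{p \in Y_m} p$, so $F_m \subseteq \alpha(m)$; therefore $H_m \setminus A \in \alpha(m)$, contradicting $A \in \alpha(m)$.

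The whole argument is a short chain of references; there is no genuine obstacle since all of the hard work has been done in Theorem \ref{ThickUlts} (which gives that the filter of a minimal right ideal is a $\mathcal{T}_m$-ultrafilter), in Proposition \ref{PWSM} (which places piecewise syndetic sets inside some member of some minimal right ideal), and in Lemma \ref{ThickAvoidI} (which says non-destructive sets lie in the complement of every $\mathcal{T}_m$-ultrafilter). The only thing one must be careful about is the direction of containment: the filter $F_m$ is a subset of every ultrafilter sitting in the closed set $Y_m$ it defines, so any statement of the form ``$S \in F_m$'' immediately propagates to every $\alpha \in Y$ at level $m$.
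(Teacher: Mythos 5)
Your proposal is correct and follows essentially the same route as the paper's proof: cite Proposition \ref{NSubsetI} for one inclusion, then combine Proposition \ref{PWSM}, Theorem \ref{ThickUlts}, and Lemma \ref{ThickAvoidI} to derive the contradiction $H_m\setminus A\in\alpha(m)$ and $A\in\alpha(m)$. The only (harmless) extra remark you add is the explicit observation that $F_m\subseteq\alpha(m)$ because $\alpha(m)\in Y_m$, a step the paper leaves implicit.
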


\begin{proof}
By Proposition \ref{NSubsetI}, it is enough to prove $\mathcal{I}_m\subseteq \mathcal{N}_m$. Suppose that $P\subseteq H_m$ is piecewise syndetic. We need to show that $P\not\in \mathcal{I}_m$. By Proposition \ref{PWSM}, fix a minimal right ideal $Y = \varprojlim Y_n\subseteq \varprojlim \beta H_n$ and an $\alpha\in Y$ with $P\in \alpha(m)$. Let $F_n$ be the filter of clopen neighborhoods of $Y_n$; by Theorem \ref{ThickUlts}, we have $F_n\in \beta(\mathcal{T}_n)$ for each $n\in \mathbb{N}$. But suppose $P$ were in $\mathcal{I}_m$; by Lemma \ref{ThickAvoidI}, we have $H_m\setminus P\in F_m$, so in particular $H_m\setminus P\in \alpha(m)$. This is a contradiction.
\end{proof}

Note that Theorem \ref{NEqualsI} implies Theorem \ref{Destruction}. We conclude this section with the (now short) proof of Theorem \ref{ThickIntSynd}.

\begin{proof}[Proof of Theorem \ref{ThickIntSynd}]\mbox{}\\*
Assume $P$ is piecewise syndetic. As $P\not\in \mathcal{I}_m$, find a thick $T\subseteq H_m$ with $T\setminus P$ not thick; we may assume $P\subseteq T$. So $H_m\setminus (T\setminus P) = P\cup (H_m\setminus T) := S$ is syndetic, and $P = T\cap S$.

For the other direction, say $T\in \mathcal{T}_m$ and $S\in \mathcal{S}_m$. Then $T\setminus T\cap S$ is not thick, so $T\cap S$ is piecewise syndetic.
\end{proof}

\begin{rem}
The proof of Theorem \ref{NEqualsI} gives us the following strengthening of Corollary \ref{ThickIntSynd}. If $P\subseteq H_m$ is piecewise syndetic and $P$ is $F$-positive for some $F\in \beta(\mathcal{T}_m)$, then there is $T\in F$ with $T\setminus P$ not thick. As we may assume $T\supseteq P$, we have that $H_m\setminus (T\setminus P) := S$ is syndetic, and $P = T\cap S$.
\end{rem}

\section{Questions}
In this final section, we collect some questions suggested by the previous results. The first question is an abstract one about families.

 \begin{que}
\label{WhenRegular}
If $(X, \mathcal{S})$ is a family and $Y$ is a set, then what are necessary and sufficient conditions for $\phi: (X,\mathcal{S})\rightarrow Y$ to be regular?
\end{que}
\vspace{2 mm}

The ``map'' language we have been using is somewhat misleading; whether or not a map $\phi: (X, \mathcal{S})\rightarrow Y$ is regular or strong depends only on the equivalence relation $E_\phi$, where $x E_\phi x'$ iff $\phi(x) = \phi(x')$. In particular, we can restrict our attention to the case where $\phi$ is surjective. 

When thinking about Question \ref{WhenRegular}, it might be useful to ``turn the problem around.'' Fix a surjection $\phi: X\rightarrow Y$. If $\mathcal{T}$ is a family on $Y$, set 
\begin{align*}
\phi^{-1}(\mathcal{T}) = \{\mathcal{S}: \mathcal{S} \text{ is a family on $X$ and } \phi(\mathcal{S}) = \mathcal{T}\}.
\end{align*}
Two families in $\phi^{-1}(\mathcal{T})$ are worth distinguishing. We set 
\begin{align*}
\phi^{-1}_{min}(\mathcal{T}) &= \{A\subseteq X: \exists B\in \mathcal{T} (\phi^{-1}(B)\subseteq A)\},\\
\phi^{-1}_{max}(\mathcal{T}) &= \{A\subseteq X: \phi(A)\in \mathcal{T}\}.
\end{align*}
It is not hard to check that for any family $\mathcal{S}\in \phi^{-1}(\mathcal{T})$, we have $\phi^{-1}_{min}(\mathcal{T})\subseteq \mathcal{S}\subseteq \phi^{-1}_{max}(\mathcal{T})$. Also note that $\phi: (X,\mathcal{S})\rightarrow Y$ is strong iff $\mathcal{S} = \phi^{-1}_{max}(\mathcal{T})$. When $\mathcal{T}$ and $\phi$ are understood and $\mathcal{S}\in \phi^{-1}(\mathcal{T})$, call $\mathcal{S}$ \emph{regular} if the map $\phi: (X, \mathcal{S})\rightarrow Y$ is regular.

\begin{prop}
\label{MinRegular}
$\mathcal{S} = \phi^{-1}_{min}(\mathcal{T})$ is regular. 
\end{prop}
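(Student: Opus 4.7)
The plan is to show directly that for any $p \in \beta(\mathcal{S})$ and any $\mathcal{T}$-filter $q' \supseteq \phi(p)$, we must have $q' = \phi(p)$; this forces $\phi(p) \in \beta(\mathcal{T})$. The main tool will be the ``saturated core'' $A^* := Y \setminus \phi(X \setminus A) = \{y \in Y : \phi^{-1}(y) \subseteq A\}$, for which surjectivity of $\phi$ yields three quick identities: $\phi^{-1}(A^*) \subseteq A$; $A \in \mathcal{S}$ iff $A^* \in \mathcal{T}$ (directly from the definition of $\phi^{-1}_{min}(\mathcal{T})$ together with upward closure of $\mathcal{T}$); and $(A \cap A')^* = A^* \cap (A')^*$.

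The key lemma I would prove first: for every $A \in p$, the saturated preimage $\phi^{-1}(A^*) \subseteq A$ is itself a member of $p$; equivalently, $A^* \in \phi(p)$. The proof is a one-line maximality argument. Certainly $\phi^{-1}(A^*) \in \mathcal{S}$ (since $A^* \in \mathcal{T}$), and for any $A' \in p$ we have
\[
\phi^{-1}(A^*) \cap A' \;\supseteq\; \phi^{-1}(A^*) \cap \phi^{-1}((A')^*) \;=\; \phi^{-1}\bigl((A \cap A')^*\bigr) \in \mathcal{S},
\]
since $A \cap A' \in p \subseteq \mathcal{S}$ gives $(A \cap A')^* \in \mathcal{T}$. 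Maximality of $p$ then yields $\phi^{-1}(A^*) \in p$.

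Given the lemma, regularity follows swiftly. Fix a $\mathcal{T}$-filter $q' \supseteq \phi(p)$ and a set $B \in q'$; it suffices to show $\phi^{-1}(B) \in p$. For every $A \in p$, the lemma gives $A^* \in \phi(p) \subseteq q'$, so $A^* \cap B \in q' \subseteq \mathcal{T}$. Then $\phi^{-1}(A^* \cap B) = \phi^{-1}(A^*) \cap \phi^{-1}(B) \subseteq A \cap \phi^{-1}(B)$ witnesses $A \cap \phi^{-1}(B) \in \mathcal{S}$. Combined with $\phi^{-1}(B) \in \mathcal{S}$ (from $B \in \mathcal{T}$), the maximality of $p$ forces $\phi^{-1}(B) \in p$, i.e., $B \in \phi(p)$. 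Hence $q' = \phi(p)$.

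The main obstacle is locating the lemma: the definition of $\mathcal{S}$ tells us only that each $A \in p$ \emph{contains} some $\phi^{-1}(B)$ with $B \in \mathcal{T}$, never that such a witness itself lies in $p$. The saturation operator $A \mapsto A^*$ furnishes a canonical, intersection-respecting choice of witness, and it is precisely the identity $(A \cap A')^* = A^* \cap (A')^*$ that lets the maximality of $p$ push $\phi^{-1}(A^*)$ into $p$. Everything else is bookkeeping.
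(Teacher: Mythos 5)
Your proof is correct and takes essentially the same approach as the paper: the saturated core $A^*$ is exactly what the paper calls the largest $E_\phi$-invariant subset $\tilde{A}$ of $A$ (via $\tilde{A}=\phi^{-1}(A^*)$), and your key lemma ($A\in p \Rightarrow \phi^{-1}(A^*)\in p$) is the step the paper asserts tersely as ``if $A\in p$, then also $\tilde{A}\in p$.'' You spell out the maximality argument behind that step (using $(A\cap A')^* = A^*\cap(A')^*$), which the paper leaves implicit, and you then finish with a direct argument rather than the paper's proof by contradiction; these are presentational rather than substantive differences.
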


\proof
Let $p\in \beta(\mathcal{S})$. For $A\subseteq X$, let $\tilde{A}$ denote the largest $E_\phi$-invariant subset of $A$. By our assumption that $\mathcal{S} = \phi^{-1}_{min}(\mathcal{T})$, we have that if $A\in p$, then also $\tilde{A}\in p$.

Suppose towards a contradiction that $\phi(p)\subsetneq q$ for some $q\in \beta(\mathcal{T})$. Fix $B\in q\setminus \phi(p)$. Then $\phi^{-1}(B)\not\in p$, so find $A\in p$ with $\phi^{-1}(B)\cap A\not\in \mathcal{S}$. We may assume $A = \tilde{A}$. But now $B\cap \phi(A)\not\in \mathcal{T}$, a contradiction since we have $B$ and $\phi(A)$ in $q$.
\qedhere
\vspace{3 mm}

Given $\mathcal{S}, \mathcal{S}'\in \phi^{-1}(\mathcal{T})$, say that $\mathcal{S}'$ is a \emph{conservative over} $\mathcal{S}$ if $\mathcal{S}\subseteq \mathcal{S}'$ and for every $p\in \beta(\mathcal{S}')$, we have $p\cap \mathcal{S}\in \beta(\mathcal{S})$. If $\mathcal{S}$ is regular and $\mathcal{S}'$ is conservative over $\mathcal{S}$, then $\mathcal{S}'$ is also regular.

\begin{prop}
\label{Conservative}
With $\mathcal{T}$ and $\phi$ as above, $\mathcal{S}\in \phi^{-1}(\mathcal{T})$ is regular iff $\mathcal{S}$ is conservative over $\phi^{-1}_{min}(\mathcal{T})$.
\end{prop}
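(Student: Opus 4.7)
The plan is to prove both directions by tracking the ``downward image'' map $A \mapsto A^\star := \{y \in Y : \phi^{-1}(y) \subseteq A\}$, i.e.\ the largest $B \subseteq Y$ with $\phi^{-1}(B) \subseteq A$. Two facts about this map will be used repeatedly: first, $\phi^{-1}(A^\star) \subseteq A$ (and in fact $\phi^{-1}(A^\star)$ is the largest $E_\phi$-invariant subset of $A$); and second, $A \in \phi^{-1}_{min}(\mathcal{T})$ iff $A^\star \in \mathcal{T}$, which follows immediately from the definition of $\phi^{-1}_{min}(\mathcal{T})$ together with upward closure of $\mathcal{T}$.

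For the $(\Leftarrow)$ direction, assume $\mathcal{S}$ is conservative over $\phi^{-1}_{min}(\mathcal{T})$ and fix $p \in \beta(\mathcal{S})$. By hypothesis $p^\circ := p \cap \phi^{-1}_{min}(\mathcal{T}) \in \beta(\phi^{-1}_{min}(\mathcal{T}))$, and Proposition \ref{MinRegular} gives $\phi(p^\circ) \in \beta(\mathcal{T})$. I would then check $\phi(p^\circ) = \phi(p)$ directly: a set $B$ lies in $\phi(p)$ iff $\phi^{-1}(B) \in p$, which forces $\phi^{-1}(B) \in \mathcal{S}$ and hence $B = \phi(\phi^{-1}(B)) \in \phi(\mathcal{S}) = \mathcal{T}$, so $\phi^{-1}(B) \in \phi^{-1}_{min}(\mathcal{T})$ and therefore $\phi^{-1}(B) \in p^\circ$; the reverse inclusion is immediate. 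Consequently $\phi(p) \in \beta(\mathcal{T})$, showing $\mathcal{S}$ is regular.

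For the $(\Rightarrow)$ direction, assume $\mathcal{S}$ is regular and fix $p \in \beta(\mathcal{S})$; set $p^\circ := p \cap \phi^{-1}_{min}(\mathcal{T})$. Upward closure of $p^\circ$ in $\phi^{-1}_{min}(\mathcal{T})$ is automatic, so the issue is maximality. Given $A \in \phi^{-1}_{min}(\mathcal{T})$ with $A \notin p$, I will produce $C \in p^\circ$ with $A \cap C \notin \phi^{-1}_{min}(\mathcal{T})$. Since $\phi^{-1}(A^\star) \subseteq A$ and $A \notin p$, we have $\phi^{-1}(A^\star) \notin p$, i.e.\ $A^\star \notin \phi(p)$. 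Regularity of $\mathcal{S}$ gives $\phi(p) \in \beta(\mathcal{T})$, so by maximality there exists $D \in \phi(p)$ with $A^\star \cap D \notin \mathcal{T}$. Set $C := \phi^{-1}(D)$; then $C \in p$ (since $D \in \phi(p)$) and $C \in \phi^{-1}_{min}(\mathcal{T})$ trivially, so $C \in p^\circ$. If some $B \in \mathcal{T}$ satisfied $\phi^{-1}(B) \subseteq A \cap C$, then $B \subseteq A^\star \cap D$, forcing $A^\star \cap D \in \mathcal{T}$ by upward closure---a contradiction. Hence $A \cap C \notin \phi^{-1}_{min}(\mathcal{T})$, establishing maximality.

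The main obstacle will be the $(\Rightarrow)$ step of transferring maximality of $\phi(p)$ in $\mathcal{T}$ to maximality of $p^\circ$ in $\phi^{-1}_{min}(\mathcal{T})$: since an arbitrary $A \in \phi^{-1}_{min}(\mathcal{T})$ need not be $E_\phi$-invariant, one cannot feed $A$ directly into the maximality of $\phi(p)$, and the trick is to replace $A$ by its invariant refinement $\phi^{-1}(A^\star)$ before extracting the witness $D$. Once this reduction is in place, both implications are routine bookkeeping using that $\phi^{-1}$ preserves intersections and that $\phi \circ \phi^{-1} = \mathrm{id}$ on $\mathcal{P}(Y)$.
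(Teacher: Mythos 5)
Your $(\Leftarrow)$ direction is correct and spells out what the paper dismisses as ``clear.'' Your $(\Rightarrow)$ argument runs parallel to the paper's: both replace a set by its invariant core $\phi^{-1}(A^\star)$ and then invoke regularity (the paper via $\phi(q)=\phi(p)$, you directly via maximality of $\phi(p)$), and your computation producing the witness $C=\phi^{-1}(D)$ is correct as far as it goes. However, there is a gap that both you and the paper leave open. You write that ``upward closure of $p^\circ$ is automatic, so the issue is maximality,'' but for $p^\circ := p\cap\phi^{-1}_{min}(\mathcal{T})$ to be a member of $\beta(\phi^{-1}_{min}(\mathcal{T}))$ it must first be a \emph{filter}, and closure under finite intersections is not automatic: for $A,A'\in p^\circ$ one certainly has $A\cap A'\in p$, but whether $A\cap A'\in\phi^{-1}_{min}(\mathcal{T})$ amounts (via your $A\mapsto A^\star$ map) to whether $A^\star\cap (A')^\star\in\mathcal{T}$, and a family $\mathcal{T}$ need not be closed under intersection.

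This is a genuine obstruction, not a bookkeeping omission. Take $Y=\{a,b,c\}$, $\mathcal{T}$ the upward closure of $\{\{a\},\{b,c\}\}$, $\phi\colon X\to Y$ with two-element fibers, and $\mathcal{S}=\phi^{-1}_{max}(\mathcal{T})$, which is strong and hence regular. For $p$ the principal ultrafilter at a point $x_0\in\phi^{-1}(a)$, the set $p^\circ$ contains both $\phi^{-1}(\{a\})$ and $\{x_0\}\cup\phi^{-1}(\{b,c\})$, yet their intersection $\{x_0\}$ lies outside $\phi^{-1}_{min}(\mathcal{T})$; so $p^\circ$ is not a filter and $\mathcal{S}$ is not conservative, even though it is regular. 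The paper's own proof makes the same silent assumption when it opens with ``suppose $p\cap\phi^{-1}_{min}(\mathcal{T})\subsetneq q$ with $q\in\beta(\phi^{-1}_{min}(\mathcal{T}))$,'' since extending $p^\circ$ to such a $q$ requires $p^\circ$ to generate a $\phi^{-1}_{min}(\mathcal{T})$-filter in the first place. In short, you have faithfully reproduced the paper's argument, gap included; to repair either proof one must either restrict to families $\mathcal{T}$ for which this intersection issue cannot arise, or weaken the definition of conservative.
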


\proof
One direction is clear. For the other direction, suppose $\mathcal{S}$ is regular, and fix $p\in \beta(\mathcal{S})$. Towards a contradiction, suppose $p\cap \phi^{-1}_{min}(\mathcal{T})\subsetneq q$ with $q\in \beta(\phi^{-1}_{min}(\mathcal{T}))$, and let $A\in q\setminus p$. We may assume $A = \tilde{A}$, i.e.\ that $A$ is $E_\phi$-invariant. But we have $\phi(A)\in \phi(q) = \phi(p)$, so $\phi^{-1}(\phi(A)) = A\in p$, a contradiction.
\qedhere
\vspace{5 mm}

The next question concerns both abstract families and the families $\mathcal{T}_m$ of thick sets on $H_m$. Suppose $X$ is a set and $F$, $F'$ are filters on $X$. In a slight abuse of language, call $F$ and $F'$ \emph{disjoint} if there are $A\in F$ and $B\in F'$ with $A\cap B = \emptyset$. If $\mathcal{S}$ is a family on $X$, we say that $\mathcal{S}$ has \emph{disjointness} if whenever $p\neq q\in \beta(\mathcal{S})$, we have $p$ and $q$ disjoint.

\begin{que}
\label{Disjointness}
Characterize those families $\mathcal{S}$ with disjointness.
\end{que}

Suppose briefly that $G$ is a discrete group. If $\mathcal{T}$ is the family of thick subsets of $G$, then $\mathcal{T}$ has disjointness. This is because the members of $\beta(\mathcal{T})$ are exactly the filters of clopen neighborhoods of minimal right ideals of $\beta G$, and minimal right ideals of $\beta G$ are either identical or disjoint.

Returning to the case that $G = \aut{\mathbf{K}}$, while it is still true that distinct minimal right ideals of $S(G)$ are disjoint, this is not necessarily true on each level. So for the family $\mathcal{T}_m$, Question \ref{Disjointness} is equivalent to the following:

\begin{que}
\label{ThickDisjoint}
Let $Y = \varprojlim Y_n$ and $Y' = \varprojlim Y_n'$ be distinct minimal right ideals of $S(G) = \varprojlim \beta H_n$. If $Y_m\neq Y_m'$, must we have $Y_m\cap Y_m' = \emptyset$?
\end{que}
\vspace{3 mm}

The final question is specific to the families $\mathcal{T}_m$, or at least to reasonably ``definable'' families. Theorem \ref{Destruction} guarantees that to every piecewise syndetic $P\subseteq H_m$, there is a thick $T\subseteq H_m$ so that $P$ destroys $T$. This question asks whether there is a ''definable'' choice of $T$.

\begin{que}
\label{DefinableDest}
Let $X = \{(P, T)\in 2^{H_m}\times 2^{H_m}: T \text{ is thick and } P\in \mathrm{Dest}(T)\}$. Does $X$ admit a Borel uniformization? In other words, is there a Borel partial function $\psi: 2^{H_m}\rightarrow 2^{H_m}$ with domain the piecewise syndetic sets so that $\psi(P)$ is thick and $P$ destroys $\psi(P)$?
\end{que}

In the setting where $G$ is a \emph{countable} discrete group, the answer to the analogous question is positive. If $P$ is piecewise syndetic, search for a finite $E = \{g_1,...,g_k\}\subseteq G$ with $T:= Pg_1\cupdots Pg_k$ thick so that $k$ is minimal. Then set $\psi(P) = Tg_1^{-1}$.

Andy Zucker

Carnegie Mellon University

Pittsburgh, PA 15213

andrewz@andrew.cmu.edu


\begin{thebibliography}{9}

\bibitem[AG]{AG} E.\ Akin and E.\ Glasner, Is there a Ramsey-Hindman theorem?, preprint, \newline http://arxiv.org/pdf/1504.00218v2.pdf, (2015).

\bibitem[B]{B} W.\ R.\ Brian, $P$-sets and minimal right ideals in $\mathbb{N}^*$, \emph{Fund.\ Math.} \textbf{229} (2015), 277--293.

\bibitem[BHM]{BHM} V.\ Bergelson, N.\ Hindman, and R.\ McCutcheon, V. Bergelson, Notions of size and combinatorial
properties of quotient sets in semigroups, \emph{Topology Proc.} \textbf{23} (1998), 23–60.

\bibitem[HS]{HS} N.\ Hindman and D.\ Strauss, \emph{Algebra in the Stone-\v{C}ech Compactification}, 2nd Edition, De Gruyter (2012).

\bibitem[P]{P} V.\ Pestov, On free actions, minimal flows, and a problem by Ellis, \emph{Trans.\ Amer.\ Math.\ Soc.}, \textbf{350 (10)}, (1998), 4149--4165.


\bibitem[Z]{Z} Andy Zucker, Topological dynamics of automorphism groups, ultrafilter combinatorics, and the Generic Point Problem, \emph{Transactions of the American Mathematical Society}, \textbf{368(9)}, (2016).

\bibitem[Z1]{Z1} Andy Zucker, Topological dynamics of closed subgrops of $S_\infty$, preprint, \newline http://arxiv.org/pdf/1404.5057v2.pdf, (2014).

\end{thebibliography}
\end{document}